\definecolor{dark-red}{rgb}{0.7,0.25,0.25}
\definecolor{dark-blue}{rgb}{0.15,0.15,0.55}
\definecolor{medium-blue}{rgb}{0,0,.8}
\definecolor{DarkGreen}{RGB}{0,150,0}
\newcommand{\nc}{\newcommand}
\newcommand{\ot}{\otimes}
\newcommand{\co}{\operatorname{co}}
\newcommand{\ord}{\operatorname{ord}}
\nc{\ydk}{^{K}_{K}\mathcal{YD}}
\newcommand\toba{{\mathfrak B }}
\newcommand{\gr}{\operatorname{gr}}
\nc{\Tr}{\mathrm{Tr}}
\nc{\X}{\mathbf{X}}
\newcommand{\Z}{{\mathbb Z}}
\newcommand{\N}{{\mathbb N}}
\newcommand{\Dih}{{\mathbb D}}
\newcommand{\matha}{\mathcal{A}}
\newcommand{\Ga}{{\Gamma}}
\newcommand{\M}{{\mathcal M}}
\newcommand{\Sc}{{\mathcal S}}
\nc{\eps}{\varepsilon}
\newcommand{\Hc}{{\mathcal H}}
\newcommand{\Oc}{{\mathcal O}}
\newcommand{\oc}{{\mathcal O}}
\newcommand{\yd}{\mathcal{YD}}
\newcommand{\ydho}{{}^{H_{0}}_{H_{0}}\mathcal{YD}}
\newcommand{\ydgap}{{}^{\Gamma_{4p}}_{\Gamma_{4p}}\mathcal{YD}}
\newcommand{\ydg}{{}^{G}_{G}\mathcal{YD}}
\newcommand{\com}{\Delta}
\newcommand{\Ext}{\operatorname{Ext}}
\newcommand\ad{\operatorname{ad}}
\newcommand\Hom{\operatorname{Hom}}
\newcommand{\Comod}{\mbox{\rm Comod\,}}
\nc{\coM}{\M^\ast(2,\Bbbk)}
\nc{\coMtres}{\M^\ast(3,\Bbbk)}
\nc{\coMcua}{\M^\ast(4,\Bbbk)}
\nc{\coMcin}{\M^\ast(5,\Bbbk)}
\nc{\coMt}{\M^\ast(t,\Bbbk)}
\nc{\coMj}{\M^\ast(j,\Bbbk)}
\nc{\coMn}{\M^\ast(n,\Bbbk)}
\nc{\coMd}{\M^\ast(d,\Bbbk)}
\nc{\GH}{G(H)}
\nc{\mas}{\oplus}
\nc{\cA}{\mathcal{A}}
\nc{\PH}{\cP(H)}
\nc{\e}{\varepsilon}
\nc{\GL}{\operatorname{GL}}
\nc{\wact}{\rightharpoonup}
\nc{\cark}{char\,k}
\nc{\adl}{\ad_\ell}
\nc{\cP}{\mathcal{P}}
\nc{\cU}{\mathcal{U}}
\nc{\fD}{\mathfrak{D}}
\nc{\cE}{\mathcal{E}}
\nc{\cS}{\mathcal{S}}
\nc{\be}{\textbf{e}}
\newcommand\id{\operatorname{id}}
\newcommand\Dic{\operatorname{Dic}}
\newcommand{\comment}[1]{}
\renewcommand{\a}{\alpha}
\renewcommand{\b}{\beta}
\def\o{\otimes}
\def\w{\omega}
\def\inv{^{-1}}
\def\l{\lambda}
\def\k{\Bbbk}
\def\DD{\mathcal{D}}
\def\ol{\overline}
\DeclareMathOperator{\ann}{ann}
\DeclareMathOperator{\Soc}{Soc}
\DeclareMathOperator{\ev}{ev}
\DeclareMathOperator{\db}{db}
\def\pf{\begin{proof}}
\def\epf{\end{proof}}
\theoremstyle{remark}
\newtheorem{thm}{Theorem}[section]
\newtheorem{lem}[thm]{Lemma}
\newtheorem{cor}[thm]{Corollary}
\newtheorem{prop}[thm]{Proposition}
\theoremstyle{definition}
\newtheorem{definition}[thm]{Definition}
\newtheorem{exas}[thm]{Examples}
\theoremstyle{remark}
\newtheorem{remark}[thm]{Remark}
\newtheorem{obs}[thm]{Remark}
\numberwithin{equation}{section}
\theoremstyle{plain}
\newcounter{maint}
\theoremstyle{plain}
\def\kG{\Bbbk G}
\def\k{\Bbbk}
\def\s{\sigma}
\newcommand{\ydK}{{}^K_K\mathcal{YD}}
\begin{document}

\renewcommand{\baselinestretch}{1.2}

\thispagestyle{empty}
%\vspace*{2in}

\title[Dimension $8p$ with the Chevalley property]
{Nonsemisimple Hopf algebras of dimension $8p$ with the Chevalley property}
\author{Margaret Beattie}
\address{Mount Allison University,
Sackville, NB E4L 1E6, Canada}
\email{mbeattie@mta.ca} 
\author{Gast\'on A. Garc\'ia}
\address{Departamento de Matem\'atica, Facultad de Ciencias Exactas,
Universidad Nacional de La Plata. CONICET. Casilla de Correo 172, (1900)
La Plata, Argentina}
\email{ggarcia@famaf.unc.edu.ar}
\thanks{The second author was partially supported by
 ANPCyT-Foncyt, CONICET, 
 and Secyt (UNLP). The third author was partially supported by the NSF grant DMS 1501179. }
\author{Siu-Hung Ng}
\address{Department of Mathematics, Louisiana State University, Baton Rouge, LA 70803, USA} 
\email{rng@math.lsu.edu}
\author{Jolie Roat}
\address{\noindent J.R.: Mathematics Department, SUNY Cortland, Cortland, NY 13045, USA}
\email{jolie.roat@cortland.edu}

\subjclass[2010]{16T05}
\date{\today}

\begin{abstract}
Let $p$ be an odd prime  and $\Bbbk$ an algebraically closed field of characteristic zero.
We classify nonsemisimple Hopf algebras over $\Bbbk$ of dimension $8p$ with the Chevalley property,   
and give partial results on nonsemisimple Hopf algebras of dimension $24$.
\end{abstract}

\maketitle

\section{Introduction}
It was conjectured by Kaplansky \cite{Kapl75} that there are only finitely many 
isomorphism classes of finite-dimensional Hopf algebras over $\k$ for any given dimension. 
This conjecture was shown to be false by counterexamples of dimension $p^4$ with $p$ an odd prime
\cite{AS98,BDG99,Gel98}, but it is true for dimension 16 \cite{GV}. However, the module categories over 
Hopf algebras of dimension 32 yield infinitely many inequivalent finite tensor categories \cite{EG02}. 
In particular, there are infinitely isomorphism classes of Hopf algebras of dimension 32, see \cite{B00}, \cite{Gr3}.

In the past two decades, Hopf algebras in a dimension which is factorized with \textit{few} primes
have been classified (cf. \cite{BG2}). As a consequence, the number of isomorphsim classes of Hopf 
algebras of dimensions less than 32 but not equal to 24 is known to be finite. 
Semisimple Hopf algebras of dimension $24$ 
with a $C^*$-structure  were classified in \cite{IK}, but the general classification without $C^*$-structure remains 
open. In the nonsemisimple case, pointed Hopf algebras of dimension 24 are classified and there are finitely 
many of them up to isomorphism. However, not all the 24-dimensional Hopf algebras are pointed. 

The family of Hopf algebras with the Chevalley property 
consists of Hopf algebras whose coradical forms a Hopf subalgebra. 
For example, pointed Hopf algebras have the Chevalley property.
In fact, there exist $8p$-dimensional 
Hopf algebras with the Chevalley property which are not pointed for each odd prime $p$,
see Proposition \ref{prop: lifting of kDic type (4,6)}. In this paper, 
in addition to some new general results,  we will provide a complete classification of 
nonsemisimple Hopf algebras of dimension 
$8p$ with $p$ an odd prime, which have the Chevalley property. Even though this result brings us one step 
closer to the complete classification of 24-dimensional nonsemisimple Hopf algebras, it remains unknown whether 
there exists any Hopf algebra $H$ of dimension 24 such that neither $H$ nor $H^*$ has the Chevalley property. 
Nevertheless,  some new observations of 24-dimensional Hopf algebras are presented in the last section of 
the paper which could be essential to the complete classification.

The paper is organized as follows. In Section \ref{sec:preliminaries} we introduce
several definitions and conventions that will be needed along the paper. We also provide some descriptions
on 
Hopf algebras with respect to their coalgebra structure and study pointed Hopf algebras over 
the group 
given by the semidirect product
$ \Ga_{4p} = C_p \rtimes_\chi C_4$. 
In Section \ref{sec:nonsemis8p} we collect some facts on nonsemisimple Hopf algebras 
of dimension $8p$
and prove our main result in Theorem \ref{thm:pointed8p}. The last section is devoted to the study 
of nonsemisimple Hopf algebras of dimension 24.
 
\section{Preliminaries}\label{sec:preliminaries}
\subsection{Definitions and Conventions}
We
work over an algebraically closed field $\Bbbk$ of characteristic
zero. Let $H$ be a finite-dimensional Hopf algebra over $\Bbbk$ with 
antipode $\Sc$. We use Sweedler's notation $\Delta(h)=h_1\ot
h_2$ for the comultiplication in $H$, but dropping the summation
symbol, see \cite{S}. The readers are referred to \cite{Mo} for details on
the basic definitions of Hopf algebras.

The \emph{coradical}
$H_0$ of $H$ is the sum of all simple
subcoalgebras of $H$. In particular, if $G(H)$ denotes the group
of \emph{group-like elements} of $H$, we have $\Bbbk G(H)\subseteq
H_0$. We write $H_{0} = \bigoplus_{d\geq 1} H_{0,d}$, where $H_{0,d}$ denotes the
direct sum of the simple subcoalgebras of $H$ of dimension $d^{2}$. By the Nichols-Zoeller
Theorem, its dimension is divisible by the order of the group of group-like elements (see \cite{andrunatale}). 

We say that a Hopf algebra $H$ is \emph{pointed} if $H_0=\Bbbk
G(H)$, and it is called \textit{basic} if all the simple $H$-modules are $1$-dimensional. 
Since $H$ is finite-dimensional, 
$H$ is basic if and only if $H^{*}$ is pointed.
 
We denote by $\{H_i\}_{i\geq 0}$ the \emph{coradical
filtration} of $H$. If $H_0$ is a Hopf
subalgebra of $H$, then 
$$\gr H = \bigoplus_{n\ge 0}\gr H(n)$$
is the associated
graded Hopf algebra, where $\gr H(n) = H_n/H_{n-1}$ for $n \ge 0$ and 
$H_{-1} =0$.  Let $\pi:\gr H\to H_0$ be the homogeneous
projection. Then 
$$R= (\gr H)^{\co \pi}:=\{r \in \gr H\mid (\id \o \pi) \Delta(r) = r \o 1\}$$
is called the \emph{diagram} of
$H$, which is a braided Hopf algebra in the category $\ydho$
of left Yetter-Drinfeld modules over $H_{0}$, and it is a
graded subalgebra of $\gr H$. The linear subspace  
$R(1)$ of homogeneous elements of $R$ of degree one, with
the braiding from $\ydho$, is called the \emph{infinitesimal
braiding} of $H$ and coincides with the subspace of primitive
elements 
$$\mathcal{P}=\{r \in R \mid \Delta_{R}(r) = r\ot 1 + 1\ot r\}\,,$$
where $\Delta_R$ denotes the comultiplication of $R$.
It turns out that the Hopf algebra $\gr H$ is the
Radford biproduct (or Magid bosonization) $\gr H \simeq R\# H_{0}$ (cf. \cite{Ra, Ma}). The
subalgebra of $R$ generated by $R(1)=V$ is isomorphic to the Nichols
algebra $\toba(V)$. If $H_{0}=\Bbbk G$ is a group algebra,
we denote the category of Yetter-Drinfeld modules simply by
$\ydg$.
Specifically, a left Yetter-Drinfeld module over $\Bbbk G$ is a finite-dimensional $G$-graded $\Bbbk$-linear 
space $M =
\oplus_{h\in G} M_h$ with a left $\Bbbk G$-module $(M,\cdot)$ structure such that
$$
\delta(g\cdot m) = ghg^{-1} \otimes g\cdot m, \qquad \forall\ m\in M_h, g, h\in G.
$$
Here $\delta$ is left $\kG$-comodule structure on $M$ given by the $G$-grading, namely with 
$\delta(m) =h \o m$ for $m \in M_h$.

Yetter-Drinfeld modules over $G$ are
completely reducible and the irreducible ones 
are parametrized by pairs $(\oc, \rho)$, where $\oc$ is a
conjugacy class of $G$ and $(\rho,V)$ is an irreducible representation of the
centralizer $C_G(\sigma)$ of a fixed element $\sigma\in \oc$: 
Then the irreducible Yetter-Drinfeld  module $M(\oc, \rho)$  is the induced  $\kG$-module 
$\kG \o_{\k C_G(\sigma)} V$.    Let $1=g_1, \dots , g_n$ be a complete set of left coset 
representatives of $C_G(\sigma)$ in $G$. Then
$\sigma_i = g_i \sigma g_i\inv$, $i=1,\dots, n$, are all the elements of  $\oc$ and 
$M(\oc, \rho) = \bigoplus_{i \le 1 \le n} g_i \o V$.  The $\kG$-comodule structure on 
$M(\oc, \rho)$ is given by $\delta(g_i v) = \s_i$ for all $i=1, \dots, n$, where $g_i v$ 
denotes the element $g_i \o v \in g_i \o V$. In particular, $\dim M(\oc, \rho) = |\oc|\dim V$.
We
denote the corresponding Nichols algebra by $\toba(\oc,\rho)$. For the definition of Nichols
algebras and basic facts, we refer the reader to \cite{A}.

Throughout this paper,  the cyclic group of order $\ell$ is denoted by $C_{\ell}$.
For $n\in \N$, $n\geq 3$, the dihedral group of order $2n$ is denoted by $\Dih_{n}$; 
we will use the  presentation
 $$
 \Dih_{n}=\langle a, y\mid a^2 = y^{n} =1, \,aya = y^{-1}\rangle.
 $$
 The dicyclic group of order $4n$, denoted by $\Dic_n$, has the presentation
  $$
 \Dic_n=\langle x, y\mid x^4 = y^{n} =1, \,xyx\inv = y^{-1}\rangle.
 $$

Let $N$ be a positive integer and let
$q \in \k$ be a primitive $ N$-th  root of unity. The \textit{Taft algebra} $ T_{q} $
is the $\k$-algebra generated by the elements $
g $ and $ x $ satisfying the relations $ x^{N} = 0 = 1-g^{N}$, $
gx=q xg $. It is a Hopf algebra with its coalgebra structure determined by 
$g$  being group-like and
$x$ a $(1,g)$-primitive, \textit{i.e.},  $ \com(g) = g\ot g $ and $
\com(x) = x\ot 1 + g\ot x $.  
It is self-dual and pointed of dimension
$ N^{2} $. If $N=2$ so that $q = -1$, then $T_{-1}$
is called the \textit{Sweedler algebra}
and will be denoted by $H_4$ throughout this article.

Recall that a tensor category $\mathcal{C}$  over $\Bbbk$
has the Chevalley property if the tensor product of any two simple
objects is semisimple. We shall say that a Hopf algebra $H$ has
the \emph{Chevalley property} if the category $\Comod (H)$ of
$H$-comodules does. In particular, $H$ has the Chevalley property if and only if
the coradical is a Hopf subalgebra. 

\begin{remark}
In some literature (for example \cite{NN}), $H$ is said to have the Chevalley property if $H$-mod does. 
In this case, $H$ has the Chevalley property if the Jacobson radical of $H$ is a Hopf ideal.
\end{remark}

\subsection{On simple coalgebras, group-likes and skew-primitives}
In this subsection we collect some structural results that will be needed later.

Recall that $H$ is a free left and a free right  $H^*$-module of rank 1  via the $H^*$-actions $\rightharpoonup$  
and  $\leftharpoonup$ defined by
$$
f \rightharpoonup h = f(h_2) h_1\quad\text{and}\quad h \leftharpoonup f  = f(h_1) h_2,
\qquad\text{ for all }f \in H^*\text{ and }h \in H. 
$$
Thus, a finite-dimensional $\k$-linear space $V$ is a right 
(resp. left) $H$-comodule 
if and only if $V$ is a left (resp. right) $H^*$-module.  If $\sigma$ is an algebra automorphism of $H^*$, 
then $V$ can be turned into another left $H^*$-module $_\s V$ with the $H^*$-action twisted by $\s$, namely
$$
f \cdot v := \s(f) v,\qquad \text{ for } f \in H^*\text{ and } v \in V.
$$

\begin{lem} \label{l:order 2}
  Let $C$ be a 4-dimensional simple coalgebra over $\k$ and $T:C \to C$ a 
  coalgebra automorphism with finite order. 
  If $\Tr (T) = 0$, then $\ord(T)=2$.
\end{lem}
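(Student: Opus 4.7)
The plan is to dualize and reduce to a question about inner automorphisms of the matrix algebra $M_2(\k)$. Since $C$ is a $4$-dimensional simple coalgebra over the algebraically closed field $\k$, its dual $C^{*}$ is isomorphic to $M_2(\k)$, and a coalgebra automorphism $T\colon C\to C$ corresponds to an algebra automorphism $T^{*}\colon M_2(\k)\to M_2(\k)$ of the same finite order. Moreover $\Tr(T)=\Tr(T^{*})$, since passing to a dual basis replaces the matrix of $T$ by its transpose. So the claim reduces to: if $\varphi$ is a finite-order algebra automorphism of $M_2(\k)$ with $\Tr(\varphi)=0$, then $\ord(\varphi)=2$.

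Next I would invoke the Skolem--Noether theorem to write $\varphi(x)=gxg^{-1}$ for some $g\in\GL_2(\k)$. Because $\varphi$ has finite order $n$, the element $g^{n}$ lies in the center $\k\cdot I$, which forces $g$ to be diagonalizable; after a change of basis I can take $g=\diag(\lambda_1,\lambda_2)$. Then $\varphi$ acts on the standard basis of matrix units by $\varphi(E_{ij})=(\lambda_i/\lambda_j)E_{ij}$, so the eigenvalues of $\varphi$ on $M_2(\k)$ are $1,1,\lambda_1/\lambda_2,\lambda_2/\lambda_1$, and hence
\[
\Tr(T)=\Tr(\varphi)=2+\frac{\lambda_1}{\lambda_2}+\frac{\lambda_2}{\lambda_1}.
\]

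Setting $\mu=\lambda_1/\lambda_2$, the hypothesis $\Tr(T)=0$ becomes $\mu+\mu^{-1}=-2$, i.e.\ $(\mu+1)^2=0$, so $\mu=-1$. Therefore $g$ is a scalar multiple of $\diag(-1,1)$, and conjugation by such a matrix is visibly an involution different from the identity, giving $\ord(\varphi)=\ord(T)=2$.

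There is no real obstacle here; the only mild subtlety is justifying that finite order of $\varphi$ forces $g$ to be semisimple (via $g^{n}\in\k^{\times}I$), and being careful that trace is preserved under dualization. The argument is purely linear-algebraic once Skolem--Noether has been applied.
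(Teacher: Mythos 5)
Your proof is correct, but it follows a genuinely different route from the paper's. The paper disposes of the lemma in three lines by citing \c Stefan's structure theorem for finite-order automorphisms of simple coalgebras (Theorem 1.4(b) of [\c S97]): there is a comatrix basis $\{e_{ij}\}$ with $T(e_{ij})=\w^{i-j}e_{ij}$ for $\w$ a primitive $n$-th root of unity, whence $\Tr(T)=(1+\w)(1+\w^{-1})=2+\w+\w^{-1}$, which vanishes exactly when $\w=-1$. You instead re-derive this normal form from scratch: dualize to an algebra automorphism of $M_2(\k)$, apply Skolem--Noether to write it as conjugation by some $g$, use finite order to force $g^{n}\in\k^{\times}I$ and hence $g$ diagonalizable (the minimal polynomial divides the separable polynomial $x^{n}-c$ in characteristic zero), and read off the eigenvalues $1,1,\mu,\mu^{-1}$ with $\mu=\lambda_1/\lambda_2$ --- your $\mu$ is precisely the paper's $\w$, and the final trace computation $2+\mu+\mu^{-1}=0\Rightarrow\mu=-1$ is identical. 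The small points you flag (semisimplicity of $g$, invariance of trace under dualization) are handled correctly. What your approach buys is self-containedness: the reader needs only Skolem--Noether rather than a specific result from [\c S97]; what the paper's citation buys is brevity, and the cited theorem also packages the statement directly at the coalgebra level without the dualization step.
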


\begin{proof}
  Let $n = \ord(T)$. By \cite[Theorem 1.4 (b)]{Ste97}, there is a primitive $n$-th root $\w \in \k$ and a basis 
  $\{e_{ij}\}_{i,j=1,2}$ of $C$ such that $T(e_{ij})=\w^{i-j} e_{ij}$. Thus, $\Tr(T) = (1+\w)(1+\w\inv)$ 
  which is zero, if and only if, $\w = -1$. 
  Therefore, $n=2$.
\end{proof}

\begin{lem}\label{l:inv1}
  Let $H$ be a finite-dimensional Hopf algebra that admits a decomposition of coalgebras
  $H=C\oplus C'$ with $C$ simple. 
  Then $C$ is invariant under $\Sc^{-2} \circ L(a)$,
  where $a$ is the distinguished group-like element of $H$ 
  and $L(a)$ denotes the left multiplication operator on $H$ by $a$. Moreover, 
  $\Tr_C(\Sc^{-2} \circ  L(a))=0$ provided $a$ is nontrivial. If,
  in addition, $\dim C= 4$, then $\Sc^{4}|_{C}=  L(a^{2})|_{C}$.
\end{lem}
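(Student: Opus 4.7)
The plan is to study the operator $T := \Sc^{-2} \circ L(a)$, showing it is a coalgebra automorphism of $H$ preserving $C$, with vanishing trace on $C$ when $a$ is nontrivial, and then deducing the $\Sc^4$ formula via Lemma~\ref{l:order 2}. Since $a\in G(H)$, the identity $\Delta(ah)=ah_1\ot ah_2$ shows $L(a)$ is a coalgebra map, and $\Sc^{-2}$ is a coalgebra map as $\Sc$ is anti-coalgebra; both are invertible, so $T$ is a coalgebra automorphism of $H$. Because $a$ is group-like, $\Sc^2(a)=a$, so $L(a)$ and $\Sc^{-2}$ commute; in particular $T^2=L(a^2)\circ\Sc^{-4}$.

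For invariance, let $\pi\colon H\to C$ denote the coalgebra projection along $C'$. The composition $\pi \circ T|_C\colon C\to C$ is a coalgebra endomorphism of the simple coalgebra $C$, so it is either zero or an isomorphism. If it is an iso, then $T(C)\cap C'=0$ and one can write $T(C)=\{c+\phi(c)\mid c\in C\}$ for a linear $\phi\colon C\to C'$; the requirement that $T(C)$ be a subcoalgebra, together with $\Delta(C)\subseteq C\ot C$ and $\Delta(C')\subseteq C'\ot C'$, yields $c_1\ot\phi(c_2)+\phi(c_1)\ot c_2=0$, and applying $\e\ot\id$ forces $\phi=0$, so $T(C)=C$. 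The opposite case $\pi\circ T|_C=0$ is excluded by the classical relation $aC=\Sc^2(C)$ for simple coalgebra direct summands of $H$ (a consequence of the defining property of the distinguished group-like; cf.\ \cite{Ra}), which gives $T(C)=\Sc^{-2}(aC)=C$ directly.

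For the trace, assume $a$ is nontrivial. Since $T|_C$ is a coalgebra automorphism of the simple matrix coalgebra $C$, under duality and Skolem--Noether it corresponds to conjugation by some invertible $u\in C^*\cong M_d(\k)$. A direct computation yields $\Tr_C(T)=\Tr(u)\Tr(u^{-1})$. Identifying $u$ in terms of the action of $a$ on the simple right $H^*$-module $V$ associated with $C$, one relates $\Tr(u)$ to the character value $\chi_V(a)$, and the coalgebra-summand hypothesis together with nontriviality of $a$ forces $\chi_V(a)=0$. This character-value vanishing is the main technical step I expect to wrestle with; an alternative route via Radford's trace formula for $\Tr(L(h)\circ\Sc^{-2})$ restricted to $C$ should also work.

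Finally, for $\dim C=4$: both $\Sc$ and $L(a)$ have finite order, so $T|_C$ does as well. By Lemma~\ref{l:order 2} and the vanishing of $\Tr_C(T)$, $\ord(T|_C)=2$. Hence $T^2|_C=\id$, which combined with $T^2=L(a^2)\circ\Sc^{-4}$ gives $\Sc^4|_C=L(a^2)|_C$.
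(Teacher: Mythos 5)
There is a genuine gap, and it sits exactly where the real content of the lemma lies. Your reduction of the invariance claim to two cases ($\pi\circ T|_C$ an isomorphism, giving $T(C)=C$ via the graph argument, or $\pi\circ T|_C=0$, giving $T(C)\subseteq C'$) is fine, but you dispose of the second case by invoking ``the classical relation $aC=\Sc^2(C)$ for simple coalgebra direct summands of $H$.'' That relation \emph{is} the first assertion of the lemma: it is equivalent to $T(C)=C$, so citing it is circular. It is also not a formal consequence of the defining property of the distinguished group-like, and it is not in \cite{Ra}; it genuinely requires the direct-summand hypothesis. The paper's proof extracts exactly this content from projectivity: a simple right coideal $I$ with $\Delta(I)\subseteq I\o C$ is a direct summand of $H$ as a left $H^*$-module, hence a projective simple $H^*$-module, and \cite[Lemma 1.1]{Ng08} then gives $I\cong\k_{\widehat{a\inv}}\o I^{\vee\vee}$, i.e.\ $I\cong{}_\sigma I$ for $\sigma=(\Sc^*)^2\circ R(\widehat{a\inv})=\tau^*$ with $\tau=\Sc^2\circ L(a\inv)$; invariance of $C=(\ann_{H^*}I)^\perp$ under $\tau$ follows. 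Without some version of this argument your case analysis cannot be closed.

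The trace assertion has the same problem: you reduce $\Tr_C(T)=\Tr(u)\Tr(u\inv)$ via Skolem--Noether and then say the vanishing of the relevant character value is ``the main technical step I expect to wrestle with,'' i.e.\ you do not prove it. The paper obtains it by checking that $\tau\inv:C\to{}_\sigma C$ is an $H^*$-module homomorphism and applying \cite[Lemma 1.2]{Ng08}, which again leans on the projectivity of $I$ and the nontriviality of $a$. By contrast, your final step is correct and is the same as the paper's: once $T(C)=C$ and $\Tr_C(T)=0$ are known and $\dim C=4$, Lemma~\ref{l:order 2} gives $\ord(T|_C)=2$, and since $\Sc^2(a)=a$ forces $L(a)$ and $\Sc^{-2}$ to commute, $T^2|_C=\id$ yields $\Sc^4|_C=L(a^2)|_C$. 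So the proposal is sound only in its last paragraph; the two substantive claims are asserted rather than proved.
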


\begin{proof} 
  Let $I$ be a simple right coideal of $H$ such that $\Delta(I) \subset I \o C$. 
  Then $C \cong \dim(I) I$ as right $H$-comodules and 
  hence left $H^*$-modules. In particular, $I$ is a simple projective $H^*$-module. 
  By \cite[Lemma 1.1]{Ng08}, we have that
  $I \cong \k_{\widehat{a\inv}} \o I^{\vee \vee}$, where $I^{\vee}$ denotes the left dual of $I$ 
  in the category of finite-dimensional
  $H^{*}$-modules, and $\hat x \in H^{**}$ denotes the image of $x\in H$ under the natural 
  isomorphism of $H\simeq H^{**}$. 
  In particular, $I \cong {_{\sigma} I}$ where $\sigma = (\Sc^*)^2 \circ R(\widehat {a \inv})$ 
  (cf. \cite[Lemma 1.2]{Ng08}). 
  Therefore, the annihilator ideal $\ann_{H^*} I$ is invariant under the automorphism $\sigma$. 
  Note that $\sigma = \tau^*$ 
  where $\tau=\Sc^2\circ L(a\inv)$ is a coalgebra automorphism. 
  Since $\ann_{H^*} I=C^\perp$, we find $\sigma(C^\perp)=C^\perp$ 
  and hence $C$ is invariant under $\tau$.

  Since $I \cong {_\sigma I}$ as $H^*$-modules, $C \cong {_\sigma C}$ as $H^*$-modules. To complete the proof, 
  by \cite[Lemma 1.2]{Ng08}, it suffices to show that $\tau\inv: C\to {_\sigma C}$ is a 
  $H^*$-module homomorphism. 
  For $f \in H^*$ and $c \in C$,
  $$
  \sigma(f) \rightharpoonup \tau\inv c \ = \sigma(f)(\tau\inv c_2)\tau\inv c_1 = f (\tau 
  (\tau\inv c_2))\tau\inv c_1=f (c_2)\tau\inv c_1 = \tau\inv (f\rightharpoonup c)\,.
  $$
  Therefore, $\tau\inv \in \Hom_{H^*}(C, {_\sigma C})$. 
  The last assertion follows then by Lemma \ref{l:order 2}.
\end{proof}

\begin{lem} \label{l:dis_gp}
  Let $H$ be a finite-dimensional Hopf algebra and $A$ a 
  Hopf subalgebra. Suppose $H$ admits a subcoalgebra decomposition  
  $H=A \oplus C$ for some subcoalgebra $C$ of $H$.
  Then $H$ and $A$ have a common distinguished group-like element.
\end{lem}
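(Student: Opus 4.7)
The plan is to use the characterization of the distinguished group-like element through left integrals in the dual: for a nonzero left integral $\lambda \in H^*$, one has
$$\lambda(h_1)\, h_2 = \lambda(h)\, a_H \quad\text{for all } h \in H,$$
where $a_H$ denotes the distinguished group-like of $H$, and the analogous identity holds in $A$ with $a_A$. Consequently, if $\lambda|_A$ is a nonzero left integral of $A^*$, restricting the identity above to $h = a' \in A$ (valid since $\Delta(a')\in A\otimes A$) yields $\lambda|_A(a')\, a_H = \lambda|_A(a')\, a_A$, and choosing $a'$ with $\lambda|_A(a')\neq 0$ immediately gives $a_H = a_A$.

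That $\lambda|_A$ satisfies the left integral equation for $A^*$ is essentially automatic: the dual form $\lambda(h_2)h_1=\lambda(h)\,1_H$ of the definition of a left integral, specialized to $h=a'\in A$, remains inside $A$ since $\Delta(a')\in A\otimes A$ and $1_H=1_A$. So the only nontrivial task is to prove $\lambda|_A \neq 0$, and this is where the hypothesis $H = A\oplus C$ is used.

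To show $\lambda|_A \neq 0$, I would dualize the hypothesis: the decomposition $H=A\oplus C$ as a $\Bbbk$-vector space gives $H^* = A^\perp\oplus C^\perp$, and since both $A$ and $C$ are subcoalgebras, both summands are two-sided ideals of $H^*$, exhibiting $H^*$ as a direct product of algebras. Write $\lambda = \lambda_1 + \lambda_2$ with $\lambda_1\in A^\perp$ and $\lambda_2\in C^\perp$. Because $1_H\in A$, the counit $\varepsilon_{H^*}$ vanishes on $A^\perp$; hence for every $f\in A^\perp$ the left-integral property gives $f\lambda = \varepsilon_{H^*}(f)\lambda = 0$. Since $A^\perp\cdot C^\perp\subseteq A^\perp\cap C^\perp = 0$, this reduces to $f\lambda_1 = 0$ for all $f\in A^\perp$. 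Taking $f$ to be the unit of the algebra summand $A^\perp$ (a central idempotent of $H^*$, nonzero in the only nontrivial case $A\subsetneq H$) forces $\lambda_1 = 0$, so $\lambda\in C^\perp$. Combined with $\lambda\neq 0$ and $H=A\oplus C$, this yields $\lambda|_A\neq 0$.

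I expect the main obstacle to be precisely this nonvanishing step; the remaining work is formal manipulation with the integral formulas. The argument also makes clear why the subcoalgebra complement $C$ is essential: without it, the left integral $\lambda$ could a priori lie entirely in the Hopf ideal $A^\perp$, which would leave no relation between $a_H$ and $a_A$.
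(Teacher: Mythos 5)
Your proof is correct and follows essentially the same route as the paper: restrict a one-sided integral $\lambda$ of $H^*$ to $A$, show the restriction is nonzero because $\lambda$ must vanish on $C$, and then read off the distinguished group-like from the standard integral formula in both $H$ and $A$. The only difference is cosmetic: where you dualize to the ideal decomposition $H^*=A^\perp\oplus C^\perp$ and invoke the central idempotent of $A^\perp$, the paper argues directly in $H$ --- if $\lambda(c)\neq 0$ for some $c\in C$, then $\lambda(c)1=\lambda(c_1)c_2\in C$ (as $C$ is a subcoalgebra), forcing $1\in C$ and contradicting $A\cap C=0$ --- which is slightly shorter but carries the same content.
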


\begin{proof}
  Let $\l$ be a nonzero right integral of $H^*$. Then, for all $h \in H$, $h \leftharpoonup \l = \l(h) 1$ and 
  $\l \rightharpoonup h = \l(h) a$ where $a$ is the distinguished group-like element of $H$.  
  Note that for $c \in C$, we have $\l(c)=0$. Otherwise, 
  $\l(c) 1= \l(c_1) c_2 \in C$ and hence $1 \in C$, 
  which contradicts that $C \cap A=0$. Therefore, $\l|_A \ne 0$ and it is a right integral of $A$. 
  Let $h \in A$ such that $\l(h) \ne 0$. Then
  $$
  a = \l(h)\inv \l(h_2) h_1 \in A\,.
  $$ 
  Thus,  $a$ is the distinguished group-like element of $A$.
\end{proof}

The following proposition  is a refined version of \cite[Proposition 2.17]{BG} and \cite[Proposition 1.3]{HilNg09}.
\begin{prop}\label{prop:BGHilNg09}
  Let $\pi : H \to A$ be a Hopf algebra epimorphism and
assume $\dim H = 2 \dim A$. Then $H^{co \pi} = \k\{1, x\}$ with $x$ a $(1, g)$-primitive element, 
$g \in G(H)$ and $2 \mid \ord g $. Moreover, if $x$ is a trivial skew-primitive, \textit{i.e.}, a 
nonzero scalar multiple of $1-g$,
then $\ord g =2$ and $H$ fits into an exact sequence of Hopf algebras 
$\k C_2 \hookrightarrow H \twoheadrightarrow A$.
If $x$ is nontrivial, then $x^2=0$ and  $H$ contains a Hopf subalgebra of dimension $2 \ord (g)$; 
in particular, $4\mid \dim H$.\qed
\end{prop}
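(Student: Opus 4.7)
The plan is to combine faithful flatness over the quotient with a repeated exploitation of the $2$-dimensional coinvariant algebra $H^{\co\pi}$. Since $\pi$ is a Hopf algebra epimorphism between finite-dimensional Hopf algebras with $\dim H = 2\dim A$, faithful flatness gives $\dim H^{\co\pi} = 2$. I would pick a basis $\{1, x\}$ of $H^{\co\pi}$ with $\eps(x)=0$; since $H^{\co\pi}$ is a left coideal subalgebra, $\Delta(x) = u \otimes 1 + v \otimes x$ for some $u,v \in H$. Applying $\eps\otimes \id$ and $\id\otimes \eps$ pins down $u=x$ and $\eps(v)=1$, and coassociativity forces $\Delta(v)=v\otimes v$, so $v = g \in G(H)$ and $\Delta(x)=x\otimes 1 + g\otimes x$. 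That $g \neq 1$ follows because a finite-dimensional Hopf algebra in characteristic zero has no nonzero primitive elements.

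Next I would extract two structural relations, each from the observation that a specific element built from $x$ lies in the tiny space $H^{\co\pi}$. Since $\pi(x) = (\eps\otimes\id)\Delta(x) = 0$, a direct calculation places $gxg^{-1}$ inside $H^{\co\pi}$; being a $(1,g)$-primitive with trivial counit, it must equal $\lambda x$ for some $\lambda \in \k^{*}$, giving $gx = \lambda xg$. Likewise, $x^{2} \in H^{\co\pi}$, so $x^{2} = \alpha + \beta x$; expanding $\Delta(x^{2})$ via $\Delta(x) = x\otimes 1 + g\otimes x$ and equating coefficients forces $\alpha = 0$ and $gx + xg = \beta(g - g^{2})$. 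Substituting $gx = \lambda xg$ and cancelling $g$ on the right produces the key identity
\[
(\lambda + 1)\,x \;=\; \beta(1 - g).
\]

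This identity cleanly splits the argument. If $x$ is trivial, i.e.\ a nonzero scalar multiple of $1-g$, then $H^{\co\pi} = \k\{1,g\}$, and the subalgebra property $g^{2} \in \k\{1,g\}$ combined with linear independence of group-likes forces $g^{2}=1$. Thus $H^{\co\pi} = \k C_{2}$ is a Hopf subalgebra, and the standard extension theorem (a Hopf subalgebra of coinvariants in a faithfully flat Hopf epimorphism is automatically normal and yields an exact sequence) gives $\k C_{2}\hookrightarrow H \twoheadrightarrow A$. If $x$ is nontrivial, then $x$ and $1-g$ are linearly independent, so the key identity forces $\lambda=-1$ and $\beta=0$; hence $gx=-xg$ and $x^{2}=0$. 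From $x = g^{n}xg^{-n} = (-1)^{n}x$ with $n = \ord(g)$ I conclude $2\mid \ord(g)$. The Hopf subalgebra $K = \k\langle g,x\rangle$ is then spanned by $\{g^{i},\,g^{i}x : 0\leq i < \ord(g)\}$, so $\dim K \leq 2\ord(g)$; since $\k C_{\ord(g)}$ is a proper Hopf subalgebra of $K$ and $\ord(g)\mid \dim K$ by Nichols--Zoeller, equality $\dim K = 2\ord(g)$ follows, whence $4\mid \dim H$.

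The main hurdle is the joint derivation of $gx = \lambda xg$ and $gx + xg = \beta(g-g^{2})$: both rest on trapping new elements constructed from $x$ inside the two-dimensional $H^{\co\pi}$, and it is the resulting linear identity $(\lambda+1)x = \beta(1-g)$ that sharpens the trivial/nontrivial dichotomy enough to read off simultaneously the parity of $\ord(g)$, the relation $x^{2}=0$, and the desired Hopf subalgebra structure.
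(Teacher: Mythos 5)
Your proof is correct and follows essentially the standard route: the paper itself gives no argument for this proposition, deferring to \cite[Proposition 2.17]{BG} and \cite[Proposition 1.3]{HilNg09}, and your derivation (the two-dimensional coinvariants $H^{\co\pi}=\k\{1,x\}$, the normal form $\Delta(x)=x\ot 1+g\ot x$, the identity $(\lambda+1)x=\beta(1-g)$ obtained by trapping $gxg^{-1}$ and $x^2$ in $H^{\co\pi}$, and the Nichols--Zoeller count giving $\dim \k\langle g,x\rangle=2\ord(g)$) reconstructs exactly the reasoning of those references. The one step you wave at --- normality of $\k C_2$ in the trivial case --- is indeed standard: freeness of $H$ over $K=\k\{1,g\}$ on both sides gives $HK^+=K^+H=\ker\pi$ by a dimension count, and the finite-dimensional normality criterion then yields the exact sequence.
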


\subsection{On pointed Hopf algebras over $\Ga_{4p}$}\label{subsec:gamma4p}
Our main results will involve Hopf algebras whose coradical is a semisimple Hopf algebra of dimension $4p$ 
for $p$ an odd prime.
To this end, we need some results on the simple representations of the group $\Gamma_{4p}$.
Recall that for $p$ a prime congruent to $1 \mod 4$, $\Gamma_{4p}$ is defined to be
the group given by the semidirect product
$ \Ga_{4p} = C_p \rtimes_\chi C_4$ where
$C_p = \langle y \rangle$, $C_4 = \langle x\rangle$ are the cyclic groups of order $p$ and $4$, respectively, and  
$\chi(x)(y) = y^\ell$, where $\ell = \frac{p-1}{2}$. Moreover,  $\Ga_{4p}$ has the presentation
$$ \Ga_{4p} = \langle x,y:\ x^{4} = 1 = y^{p},\ xy = y^{\ell}x\rangle, $$
and it has $4 + \frac{p-1}{4}$ conjugacy classes and hence $\Ga_{4p}$ has $4 + \frac{p-1}{4}$ 
simple representations.
Among them, $4$ are one-dimensional, say $\alpha_{0},\ldots, \alpha_{3}$, 
which are given the quotient $\Ga_{4p}/C_p$. The remaining $\frac{p-1}{4}$  simple representations 
are 4-dimensional, say
$(\beta_{1},V_{1}), \ldots, (\beta_{\frac{p-1}{4}}, V_{\frac{p-1}{4}})$.

%The explicit formula for the braiding is then given by
%\begin{equation} \label{yd-braiding}
%c(g_iv\otimes g_jw) = \sigma_i\cdot(g_jw)\otimes g_iv = g_h(\gamma\cdot
%v)\otimes g_iv\end{equation} for any $1\le i,j\le n$, $v,w\in V$,
%where $\sigma_ig_j = g_h\gamma$ for unique $h$, $1\le h \le n$ and
%$\gamma \in C_G(\sigma)$.
%Since
%$\sigma\in Z(C_G(\sigma))$, Schur's Lemma says that
%$\sigma$  acts by a scalar $q_{\sigma\sigma}$
%on $V$.
Denote by $\omega$
a primitive $4$th root of unity and by $q$ a primitive $p$th root of unity.

\begin{table}[ht]
\begin{center}
\begin{tabular}{|p{6cm}|p{3cm}|p{3.3cm}|p{2cm}|}
\hline {\bf Conjugacy class $\oc$} &    {\bf Centralizer} & {\bf Irrepn. $\rho$} &
$\dim M(\oc,\rho)$
\\ \hline  $e$   & $\Ga_{4p}$ & $\alpha_{j}$, $0\leq j \leq 3$ & $1$\\
\cline{3-4} & & $\beta_{j}$, $1\leq j \leq \frac{p-1}{4} $ & $4$
\\ \hline  $\Oc_{x^{m}} = \{x^{m}, yx^{m},\ldots, y^{p-1}x^{m} \}$, \newline
$\mid \Oc_{x^{m}} \mid = p$, $0<m<4$
& $\langle x \rangle \simeq C_{4}$ &
$\chi_j$, $\chi_{j}(x) = \omega^{j}$
\newline $0\leq j \leq 3$& $p \geq 5$
\\ \hline  $\Oc_{y^{k}} = \{y^{k},  y^{k\ell }, y^{k\ell^{2}}, y^{k\ell^{3}}\}$,
\newline $ \mid \Oc_{y^{k}} \mid =4$, $1<k\leq \frac{p-1}{4}$   &
$\langle y \rangle \simeq C_{p} $
& $\psi_{s}$,  $\psi_{s}(y) = q^{s}$
\newline $0\leq s<p$&  4
\\ \hline
\end{tabular}
\end{center}
\caption{$\Ga_{4p}$, with $p \equiv 1\mod  4$.}\label{tabladnpar:4p}
\end{table}

\begin{lem}\label{lem:nichols-infty}
Assume either $\oc$ is trivial or $\oc = \oc_{y^{k}}$ for $1<k\leq \frac{p-1}{4}$. Then
$\dim \toba(\oc, \rho)$ is infinite for $\rho$ any simple representation of $\Ga_{4p}$
or simple representation of $C_{p}$, respectively.
\end{lem}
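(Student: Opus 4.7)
The plan is to handle the two cases in parallel, reducing each to either a direct triviality or an appeal to Heckenberger's classification of finite-dimensional Nichols algebras of diagonal type.

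For the trivial conjugacy class $\oc = \{e\}$, the Yetter-Drinfeld module $M(\{e\},\rho) = V_\rho$ is entirely graded at the identity, so its braiding on $M\otimes M$ is the usual flip. Therefore $\toba(M) \cong S(V_\rho)$, the symmetric algebra on a nonzero space, which is infinite-dimensional for any simple representation $\rho$ of $\Ga_{4p}$.

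For the second case, with $\oc = \oc_{y^k}$, centralizer $C_p = \langle y\rangle$, and $\rho = \psi_s$, I would work in the basis $v_i = x^{i-1} \otimes v$ ($i = 1, 2, 3, 4$) of $M = M(\oc,\psi_s)$. Using the commutation $yx^j = x^j y^{\ell^{-j}}$ in $\Ga_{4p}$, one computes
$$
c(v_i \otimes v_j) = q^{sk\ell^{i-j}}\, v_j \otimes v_i,
$$
so the braiding on $M$ is diagonal in this basis with common self-braiding $\eta := q^{sk}$. When $s \equiv 0 \pmod p$, $\eta = 1$, and the inclusion $\Bbbk[v_1] \simeq \toba(\Bbbk v_1) \hookrightarrow \toba(M)$, valid for any braided subspace, already forces $\toba(M)$ to be infinite-dimensional. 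When $s \not\equiv 0 \pmod p$, I would restrict to the two-dimensional braided subspace $W = \operatorname{span}(v_1, v_3)$; since $\ell^2 \equiv -1 \pmod p$, the restricted braiding satisfies $b_{11} = b_{33} = \eta$ and $b_{13}b_{31} = \eta^{-2}$. The generalized Cartan integers of this rank-$2$ diagonal braiding are $a_{12} = a_{21} = -2$, producing the affine Cartan matrix of type $\tilde A_1$. By Heckenberger's classification of arithmetic root systems of rank two, no diagonal braiding with this Cartan datum and $\eta$ of order $p \geq 5$ yields a finite-dimensional Nichols algebra, so $\dim \toba(W) = \infty$; combined with $\toba(W) \hookrightarrow \toba(M)$ as braided Hopf subalgebras, this gives $\dim \toba(M) = \infty$.

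The main obstacle is certifying cleanly that our specific braiding lies outside Heckenberger's short list of finite-dimensional rank-$2$ examples. Here the constraint $1 < k \leq (p-1)/4$ forces $p \geq 13$, so $\eta$ is a primitive $p$-th root of unity of order at least $13$, well clear of the small roots of unity that appear in Heckenberger's exceptional rank-$2$ cases; the appeal to the classification is therefore automatic. The only care needed is in the bookkeeping that translates the given $\Ga_{4p}$-YD structure into explicit diagonal braiding data over the abelian subgroup $C_p$.
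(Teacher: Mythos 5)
Your argument is correct and follows essentially the same route as the paper's proof: the trivial class yields a symmetric braiding and hence an infinite-dimensional polynomial subalgebra, while $\oc_{y^{k}}$ yields a braiding of diagonal type that is symmetric when $s=0$ and is dispatched by Heckenberger's classification when $s\neq 0$. The only difference is one of detail: you make the appeal to the classification explicit by restricting to the braided subspace $\operatorname{span}(v_1,v_3)$, computing $b_{11}=b_{33}=\eta$, $b_{13}b_{31}=\eta^{-2}$ via $\ell^2\equiv -1 \pmod p$, and exhibiting the affine Cartan matrix of type $\tilde{A}_1$ at a root of unity of order $p\geq 5$, whereas the paper stops after writing down the diagonal braiding matrix and cites Heckenberger directly.
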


\pf Assume first that $\oc=\{e\}$ and let $(\rho,V)$
be a simple representation of $\Ga_{4p}$. Then $M(\oc,\rho) = V$ is the $\kG$-module 
$V$ with the trivial $G$-grading. This implies that $c(v\ot w) = w\ot v$ for all
$v, w\in V$; in particular, $c(v\ot v) = v\ot v$. Consequently, $\dim \toba(\oc, \rho)$ is infinite,
since it contains the polynomial algebra in $v$.

Assume now that
$\oc = \oc_{y^{k}} = \{y^{k},  y^{k\ell }, y^{k\ell^{2}}, y^{k\ell^{3}}\}
= \{y^{k},  xy^{k}x^{-1}, x^{2}y^{k}x^{-2}, x^{3}y^{k}x^{-3}\} $
for some $k\neq 0$. Since
$C_{\Ga_{4p}}(y^{k}) = \langle y \rangle \simeq C_{p}$, all simple representations
are one-dimensional, say $\psi_{0},\ldots, \psi_{p-1}$ with $\psi_{s}(y) = q^{s}$.
Then $M(\oc_{y^{k}}, \psi_{s}) = \bigoplus_{j=0}^{3} x^{j}\ot \Bbbk 1$. If we
denote $w_{j} = x^{j}\ot 1$, we have that
$$ x\cdot w_{j} = w_{j+1},\qquad y\cdot w_{j} = q^{s\ell^{4-j}} w_{j},
\qquad \text{ and }\qquad\delta(w_{j}) = y^{k\ell^{j}}\ot w_{j}
\qquad\text{ for all }j\in \Z_{4}.$$
In particular, the formula for the braiding equals
$$c(w_{r}\ot w_{t}) = y^{k\ell^{r}}\cdot w_{t}\ot w_{r} =
q^{ks\ell^{r+4-t}} w_{t}\ot w_{r}
\qquad \text{ for all }0\leq r,t <4;$$
that is, $M(\oc_{y^{k}}, \psi_{s}) $ is a braided vector space of diagonal type.
If $s=0$, then $c(w_{r}\ot w_{t}) =
w_{t}\ot w_{r} $, which implies that $\dim \toba (\oc_{y^{k}}, \psi_{0}) = \infty$.
Assume $s\neq 0$ and
denote $\xi = q^{ks}$. 
Then by \cite[Theorem 17]{H}, we have that $\dim \toba (\oc_{y^{k}}, \psi_{s}) = \infty$
for all $0< s < p$.
\epf

\begin{obs} For
$0<m<4$, consider the conjugacy class $\oc = \oc_{x^{m}}=\{x^{m}, yx^{m}y^{-1},\ldots, y^{p-1}x^{m}y^{1-p} \}$, 
and let $\chi_k$ be the one-dimensional representation of $\Ga_{4p}$ given by $\chi_{k}(x) = \omega^{k}$
for $0<k<4$. Then, $M(\oc_{x^{m}}, \chi_{k}) = \bigoplus_{j=0}^{p-1} y^{j}\otimes \Bbbk 1$;
in particular, $\dim M(\oc_{x^{m}}, \chi_{k}) = p$.
If we write $v_{j} = y^{j}\ot 1$ for $0\leq j <p$, we have that
$$ x\cdot v_{j} = \omega^{k}v_{j\ell +1},\qquad y\cdot v_{j} = v_{j+1},
\qquad \text{ and }\qquad \delta(v_{j}) = y^{j(1-\ell^{m})}x^{m}\ot v_{j}
\qquad\text{ for all }j\in \Z_{p}.$$
In particular, the braiding equals
$$c(v_{r}\ot v_{t}) = y^{r(1-\ell^{m})}x^{m}\cdot v_{t}\ot v_{r} =
\omega^{mk} v_{t\ell^{m}+ r(1-\ell^{m})}\ot v_{r}
\qquad \text{ for all }0\leq r,t <p.$$
Consequently, if $mk\equiv 0 \mod 4$, we have that
$c(v_{0}\ot v_{0}) = x^{m}\cdot v_{0}\ot v_{0} =
v_{0}\ot v_{0} $, which implies that $\dim \toba(\oc_{x^{m}}, \chi_{k})=\infty$.
If $mk\not\equiv 0 \mod 4$, pointed Hopf algebras with group of points $\Gamma_{4p}$ do occur,
see \cite{Gr-zoo}, \cite{AGr}. 
\end{obs}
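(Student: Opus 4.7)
This remark is a chain of explicit computations of the Yetter--Drinfeld module $M(\oc_{x^m},\chi_k)$, its braiding, and a resulting infinite-dimensionality statement for $\toba$, so the plan is to verify each formula in order and then extract the last claim.

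First I would pin down the conjugacy class and the centralizer. Iterating the defining relation $xy = y^\ell x$ yields $x^m y^j = y^{j\ell^m} x^m$, and hence $y^j x^m y^{-j} = y^{j(1-\ell^m)} x^m$. Because $\ell$ has order $4$ in $(\Z/p)^\times$ (available since $p\equiv 1\mod 4$), the scalar $1-\ell^m$ is a unit mod $p$ for every $0<m<4$, so the $p$ conjugates of $x^m$ are pairwise distinct and $C_{\Ga_{4p}}(x^m) = \langle x\rangle \simeq C_4$. Thus $\{1,y,y^2,\ldots,y^{p-1}\}$ is a complete set of left coset representatives of $\langle x\rangle$ in $\Ga_{4p}$, and from the recipe $M(\oc,\rho) = \ku\Ga_{4p}\otimes_{\ku\langle x\rangle}\ku_{\chi_k}$ recalled in Section \ref{sec:preliminaries} one reads off the basis $v_j = y^j\otimes 1$ together with $\dim M(\oc_{x^m},\chi_k)=p$.

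Next I would verify the module and comodule structures. The action $y\cdot v_j = v_{j+1}$ is immediate, while $x\cdot v_j = xy^j\otimes 1 = y^{j\ell}x\otimes 1$ followed by pushing $x$ across the balanced tensor against $1\in \ku_{\chi_k}$ produces the $x$-action with the scalar $\omega^k$. The comodule structure $\delta(v_j) = (y^j x^m y^{-j})\otimes v_j$ is the general formula for an induced Yetter--Drinfeld module, which by the first step rewrites as $y^{j(1-\ell^m)}x^m\otimes v_j$. The braiding is then a direct application of $c(u\otimes v) = u_{(-1)}\cdot v\otimes u_{(0)}$: iterating the $x$-action gives $x^m\cdot v_t = \omega^{mk} v_{t\ell^m}$, after which the subsequent $y^{r(1-\ell^m)}$-action shifts the subscript by $r(1-\ell^m)$, producing the stated $c(v_r\otimes v_t) = \omega^{mk}\, v_{t\ell^m + r(1-\ell^m)}\otimes v_r$.

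Finally, the infinite-dimensionality claim is essentially a one-line consequence of the braiding formula. When $mk\equiv 0\mod 4$, setting $r=t=0$ gives $c(v_0\otimes v_0) = \omega^{mk}v_0\otimes v_0 = v_0\otimes v_0$, so $v_0$ braids trivially with itself. Hence every braid generator acts as the identity on $v_0^{\otimes n}$ for each $n$, the quantum symmetriser equals $n!\cdot \id$, which is invertible in characteristic zero, and therefore $v_0^n\ne 0$ in $\toba(\oc_{x^m},\chi_k)$ for every $n$. This embeds the polynomial algebra in $v_0$ into the Nichols algebra and forces $\dim \toba(\oc_{x^m},\chi_k)=\infty$. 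The only point that requires care is the index arithmetic modulo $p$ in the module/braiding step; the closing quantum-symmetriser observation is standard once $c(v_0\otimes v_0)=v_0\otimes v_0$ is in hand.
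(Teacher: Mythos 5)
Your verification is correct and follows exactly the direct computation that constitutes the paper's own justification of this remark: identify the centralizer $\langle x\rangle\simeq C_4$ via $y^{j}x^{m}y^{-j}=y^{j(1-\ell^{m})}x^{m}$ (using that $1-\ell^{m}$ is a unit mod $p$ because $\ell$ has order $4$), read off the induced Yetter--Drinfeld structure, and conclude infinite-dimensionality from $c(v_{0}\ot v_{0})=v_{0}\ot v_{0}$ by the standard polynomial-subalgebra argument, just as in Lemma \ref{lem:nichols-infty}. One small point worth recording: your computation $x\cdot v_{j}=xy^{j}\ot 1=y^{j\ell}x\ot 1=\omega^{k}v_{j\ell}$ is the correct one, and it is what makes the displayed braiding formula $c(v_{r}\ot v_{t})=\omega^{mk}v_{t\ell^{m}+r(1-\ell^{m})}\ot v_{r}$ come out right, so the subscript $j\ell+1$ in the paper's stated $x$-action appears to be a typographical slip for $j\ell$.
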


\section{Nonsemisimple Hopf algebras $H$ of dimension $8p$}\label{sec:nonsemis8p}
Throughout this section, we assume that $H$ is a nonsemisimple Hopf algebra of dimension $8p$,
with $p$ an odd prime unless specified otherwise. 
For each $\b \in G(H^*)$,  the 1-dimensional $H$-module associated with $\b$ is denoted by $\k_\b$. 
In particular, $\k_\e$ is the trivial 1-dimensional $H$-module. We denote the Jacobson radical of $H$ by $J(H)$, 
and the projective cover of an 
$H$-module $V$ by $P(V)$.

\subsection{Nonsemisimple Hopf algebras $H$ of dimension $8p$ with $|G(H)|=2p$}
In this subsection we improve upon the results of \cite{BG2} for $H$ a nonsemisimple nonpointed 
Hopf algebra of dimension $8p$ and group likes of order $2p$, $p$ an odd prime.  
We begin by recalling the well-known structure of the pointed Hopf algebras of dimension $4p$.

\begin{obs}\label{rmks:pointed4p-8}
For $p$ an odd prime, nonsemisimple pointed Hopf algebras of dimension $4p$ were described in 
\cite[A.1]{andrunatale} and for $p=2$ the pointed Hopf algebras of dimension $8$ are well-known 
and were described for example in \cite{Ste97}.  
In the first case, all such Hopf algebras $H$ have $G(H) \simeq C_{2p}$.  
The pointed Hopf algebras of dimension $8$ can have group-likes of order $2$ or $4$ and, if order $4$, 
can have group of group-likes isomorphic to the Klein $4$-group or to the cyclic group of order $4$.   
In either case ($p$ odd or $p=2$), there are $4$ nonisomorphic Hopf algebras with group-likes 
of order $2p$ as listed below.  
Here $g,h$ are group-like elements, $x$ is skew-primitive and $\lambda$ is a primitive $2p$th root of unity.  
\begin{align*}
\matha(-1,0):=&\Bbbk\langle g,x  \mid
g^{2p}-1=x^2 =gx+xg=  0\rangle,\quad
\com(x)=x\ot 1  + g\ot x.\\
\matha(-1,0)^\ast:=&\Bbbk\langle g,x  \mid
g^{2p}-1=x^2 =gx -\lambda xg=  0\rangle,
\quad \com(x)=x\ot 1+g^p\ot x .\\
\matha(-1,1) :=&\Bbbk\langle g,x  \mid
g^{2p}-1= x^2 - g^2 + 1 =gx+   xg=  0\rangle,
\quad\com(x)=x\ot 1+g \ot x .\\
H_{4} \otimes \Bbbk C_p:=&\Bbbk\langle g,h,x | h^2 -1 = g^p -1 = x^2 = gh-hg = hx + xh = gx -xg =0\rangle,
\quad\com(x)=x\ot 1+h \ot x .
\end{align*}

In the case $p=2$,
 we write $\matha_{8}(-1,1)$ to emphasize it is $8$-dimensional. Note that $\matha_{8}(-1,1) =A_{4}''$
 in \cite{Ste97} notation.

The dual of $\matha(-1,1)$ is nonpointed and as a coalgebra 
is isomorphic 
 to $ H_4 \oplus \mathcal{M}^\ast(2, \Bbbk)^{p-1}$.
On the other hand, the Hopf algebras
$\matha(-1,0)$ and $\matha(-1,1)$ do not have Hopf subalgebras isomorphic to $H_4$ but
$\matha(-1,0)^\ast$ and $H_4 \otimes \Bbbk C_p$ do.
In all four cases, $\Sc^4 = \id$.  The distinguished group-like elements of $\matha(-1,0)$, $\matha(-1,0)^*$  
and $\matha(-1,1)$  are respectively $g$, $g$ and $g^p$ in the above convention. 
%If $A$ is a pointed Hopf algebra isomorphic to $\matha(-1,1)$ or 
%$\matha(-1,0)$, then 
% $\{g^i x^j|i=0,\dots, 2p-1 \text{ and } j =0,1\}$ forms a basis of $A$ and the 
%functional $\l: A \to \k$ defined by $\l(g^i x^j) = \delta_{i,0} \delta_{j,1}$ 
%  is right integral of $A^*$. It can be checked directly that $g$ is the distinguished group-like element of $A$.

 \end{obs}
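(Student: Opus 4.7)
The statement packages several independent assertions: the classification itself, nonisomorphism of the four algebras, the coalgebra structure of $\matha(-1,1)^\ast$, identification of $H_4$-subalgebras, the antipode order, and the distinguished group-likes. The plan is to cite the classification (\cite[Appendix A.1]{andrunatale} for $p$ odd and \cite{Ste97} for $p=2$; the odd case already forces $G(H)\cong C_{2p}$, and the case $p=2$ singles out the two algebras in Stefan's list whose group of group-likes has order $4$) and then verify the remaining structural claims directly from the presentations. Nonisomorphism of the four Hopf algebras follows from coalgebra invariants: $\matha(-1,1)$ is the unique one with $x^{2}\ne 0$; among the other three, the order of the skew-primitive partner of $x$ is $2p$, $2$, $2$ respectively, and $\matha(-1,0)^\ast$ is separated from $H_4\otimes\Bbbk C_p$ by the relation $gx=\lambda xg$ with $\lambda\ne\pm 1$ in place of commutation.

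For the coalgebra decomposition of $\matha(-1,1)^\ast$ I would dualize: since the coalgebra structure of $H^\ast$ is the algebra structure of $H$, it suffices to decompose $\matha(-1,1)$ as an algebra. The key observation is that $g^{2}$ is central of order $p$ in $\matha(-1,1)$, so the algebra splits along central characters $g^{2}\mapsto\zeta$ with $\zeta^{p}=1$. The trivial character $\zeta=1$ produces a quotient with $g^{2}=1$ and $x^{2}=0$, $gx+xg=0$, which is $H_4$ as an algebra. For each of the $p-1$ nontrivial choices, $x^{2}=\zeta-1\ne 0$ so $x$ is invertible, and the orthogonal idempotents $\tfrac{1}{2}(1\pm g/\sqrt{\zeta})$ together with $x$ exhibit the quotient as $M_{2}(\Bbbk)$; one checks that the four resulting matrix units fill the $4$-dimensional quotient. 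Since $H_4^\ast\cong H_4$ and $M_2(\Bbbk)^\ast\cong\coM$ as coalgebras, dualizing the algebra decomposition gives $\matha(-1,1)^\ast\cong H_4\oplus\coM^{\,p-1}$, with matching total dimension $4+4(p-1)=4p$.

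For the $H_4$-subalgebra question, a Hopf embedding $H_4\hookrightarrow H$ must send the generating skew-primitive of $H_4$ into $P_{1,u}(H)$ for some $u\in G(H)$ of order $2$. In $\matha(-1,0)$, $\matha(-1,1)$ and $\matha(-1,0)^\ast$ the only such $u$ is $g^{p}$, so I would compute $P_{1,g^{p}}$ from the coradical filtration. In the first two, the nontrivial skew-primitives are all of type $(g^{i},g^{i+1})$, so $P_{1,g^{p}}=\Bbbk(1-g^{p})$ is purely trivial and no $H_4$ embeds. In $\matha(-1,0)^\ast$ the generator $x$ is itself $(1,g^{p})$-primitive and satisfies $(g^{p})^{2}=1$, $x^{2}=0$, $g^{p}x+xg^{p}=0$ (using $\lambda^{p}=-1$), which are exactly the $H_4$ relations. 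The case $H_4\otimes\Bbbk C_p$ is immediate.

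Finally, $\Sc^{4}=\id$ is a direct check on generators: $\Sc(g)=g\inv$, and $\Sc(x)$ is read off from $\Delta(x)$ via the antipode axiom, yielding $\Sc^{2}(x)=\pm x$ in each case. The distinguished group-likes are then obtained either by computing a right integral $\lambda\in H^{\ast}$ from the presentation and extracting $a$ via $\lambda\rightharpoonup h=\lambda(h)\,a$, or by using Radford's formula and matching against $\Sc^{2}$; the candidates are immediately narrowed to $g$, $g$, $g^{p}$ respectively. The main obstacle is the $M_2(\Bbbk)$ identification in the second paragraph, where one must verify that the idempotents together with $x$ really span each nontrivial central quotient; everything else is linear algebra given the explicit presentations.
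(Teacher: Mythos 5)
The paper offers no proof of this remark beyond the citations to \cite[A.1]{andrunatale} and \cite{Ste97}, so your plan --- cite those sources for the classification and then verify the structural assertions directly from the presentations --- is the right one, and most of it goes through. In particular, the central-character decomposition of $\matha(-1,1)$ along $\langle g^2\rangle\cong C_p$ (with the $p-1$ nontrivial quotients being split quaternion algebras, hence $M_2(\k)$) correctly yields $\matha(-1,1)^*\cong H_4\oplus\coM^{p-1}$ as coalgebras; the $H_4$-subalgebra analysis via $P_{1,u}$ for $u$ of order $2$ is correct (including $\lambda^p=-1$ for $\matha(-1,0)^*$); and $\Sc^2(g)=g$, $\Sc^2(x)=-x$ gives $\Sc^4=\id$ in all four cases.

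The one genuine problem is the last item. You describe the correct method --- compute a right integral $\l$ of $H^*$ and read off $a$ from $\l\rightharpoonup h=\l(h)a$ --- but then simply assert the remark's answer $g$, $g$, $g^p$ without carrying the computation out, and the computation does not return that answer. In each of the three algebras the condition $\l(h_1)h_2=\l(h)1$ forces $\l(g^i)=0$ for all $i$ and $\l(g^ix)=0$ for $i\neq 0$, so the right integral is $\delta_x$ up to scalar; hence $a=\l(x_2)x_1=\l(x)\,u$ where $u$ is the group-like in $\com(x)=x\ot 1+u\ot x$. This gives $a=g$ for $\matha(-1,0)$, $a=g^p$ for $\matha(-1,0)^*$, and $a=g$ for $\matha(-1,1)$; that is, the remark's values for the last two algebras appear to be interchanged. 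Note that the paper's own later argument (the theorem establishing $\ord(\Sc^2)=2p$) takes the distinguished group-like of $A\cong\matha(-1,1)$ or $\matha(-1,0)$ to be $g$, of order $2p$, and would collapse if it were $g^p$; so trust the computation over the remark as printed, and carry it out explicitly rather than ``narrowing candidates.'' Two smaller points: your nonisomorphism invariants should be phrased presentation-independently (e.g.\ the set of $u\in G(H)$ with $P_{1,u}\neq\k(1-u)$, whether some nontrivial skew-primitive squares to zero, and the kernel of the conjugation action of $G(H)$ on the nontrivial skew-primitives), and the claim that all nontrivial skew-primitives of $\matha(-1,0)$ and $\matha(-1,1)$ lie in $\bigcup_iP_{g^i,g^{i+1}}$ deserves a word of justification via Taft--Wilson and $\dim H_1-\dim H_0=2p$.
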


The following is known from \cite{BG2}.

\begin{prop} \cite[Propositions 4.5, 4.6, 4.7]{BG2}\label{prop:bg454647}
Let $H$ be a nonsemisimple nonpointed Hopf algebra of dimension $8p$ and group-likes of order $2p$, 
$p$ an odd prime. Then:  
\begin{enumerate}
 \item[$(a)$] $H$ contains a pointed Hopf subalgebra $A$ of dimension $4p$.  As a coalgebra $H \simeq 
A \oplus \M^*(2,\Bbbk)^p$ and $H_0 \simeq \Bbbk C_{2p} \oplus \M^*(2,\Bbbk)^p$ and so has dimension $6p$.
 \item[$(b)$] $H^*$ does not have a Hopf subalgebra isomorphic to $H_4$.
 \item[$(c)$] $|G(H^*)|$ is $2$ or $4$.
 \item[$(d)$]  Suppose either 
 \begin{enumerate}
  \item[$(i)$] $A^{*}$ is nonpointed (so that $A$ is isomorphic to $\matha(-1,1)$), or
  \item[$(ii)$] $A^*$ is pointed and $H^*$ has a nontrivial 
 skew-primitive,
 \end{enumerate}
 then $H^*$ has a Hopf subalgebra isomorphic to $\matha_{8}(-1,1)$.    
\end{enumerate}
     \end{prop}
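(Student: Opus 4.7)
The plan is to exploit the explicit list of pointed Hopf algebras of dimension $4p$ recorded in Remark \ref{rmks:pointed4p-8}, the Nichols--Zoeller freeness theorem, and Proposition \ref{prop:BGHilNg09}. For part $(a)$, since $H$ is nonpointed there is a simple subcoalgebra of dimension at least $4$. Writing $H_0=\k G(H)\oplus\bigoplus_{d\ge 2}H_{0,d}$ and using that $G(H)$ permutes simple subcoalgebras of a fixed dimension by conjugation, together with divisibility consequences of Nichols--Zoeller, one bounds each $\dim H_{0,d}$. Combined with $\dim H_0\le 8p$ and the requirement that at least one $H_{0,d}$ with $d\ge 2$ be nonzero, this forces $H_0=\k C_{2p}\oplus \M^\ast(2,\k)^p$, of dimension $6p$. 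A nontrivial $(1,g)$-skew primitive extracted from the coradical filtration, together with $\k C_{2p}$, generates a pointed Hopf subalgebra $A$ of dimension $4p$, which by Remark \ref{rmks:pointed4p-8} must be one of the four listed; the remaining simple subcoalgebras then yield the coalgebra decomposition $H\simeq A\oplus \M^\ast(2,\k)^p$.

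For $(b)$ and $(c)$: if $H_4\hookrightarrow H^\ast$, dualising produces a Hopf quotient $H\twoheadrightarrow H_4^\ast\cong H_4$, and iterating Proposition \ref{prop:BGHilNg09} through the associated tower of quotients forces coalgebra constraints on $H$ incompatible with the decomposition $H\simeq A\oplus \M^\ast(2,\k)^p$ of $(a)$, proving $(b)$. For $(c)$, $|G(H^\ast)|$ divides $8p$; one rules out the values $p$, $2p$, $4p$, $8$, and $8p$ by combining Nichols--Zoeller applied to $\k G(H^\ast)\subseteq H^\ast$ with the algebra decomposition $H^\ast\cong A^\ast\oplus (\M^\ast(2,\k)^p)^\ast$ dual to $(a)$, and invoking $(b)$ to eliminate the configurations that would otherwise embed $H_4$ into $H^\ast$. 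The remaining admissible orders are $2$ and $4$.

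For $(d)$: in case $(i)$, $A\cong \matha(-1,1)$ and its dual $A^\ast$ is coalgebra isomorphic to $H_4\oplus \M^\ast(2,\k)^{p-1}$ by Remark \ref{rmks:pointed4p-8}, so the $H_4$-summand supplies a nontrivial $(1,g)$-skew primitive $x\in H^\ast$, while $(c)$ provides the order-$4$ group-like; Proposition \ref{prop:BGHilNg09} applied to the Hopf subalgebra $\k\langle g,x\rangle\hookrightarrow H^\ast$ then forces the relations $x^2=g^2-1$ and $gx+xg=0$, yielding an embedded copy of $\matha_8(-1,1)$. In case $(ii)$, the hypothesis directly furnishes a nontrivial skew primitive, and the same assembly argument applies. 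The principal obstacle I expect is part $(a)$: excluding matrix coalgebra summands of dimension greater than $4$ and rigorously pinning down $\dim H_0=6p$ requires careful use of Nichols--Zoeller-type divisibility together with the nonsemisimplicity hypothesis, and this dimension count underpins every subsequent part.
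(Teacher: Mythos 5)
First, a framing remark: the paper does not prove this proposition at all --- it is imported verbatim from \cite[Propositions 4.5--4.7]{BG2} --- so there is no internal proof to compare against, and your attempt is really a reconstruction of that external argument. Your outline of part $(a)$ follows the standard Andruskiewitsch--Natale counting route and is essentially sound, except at one genuine pressure point: you ``extract'' a nontrivial $(1,g)$-skew-primitive of $H$ from the coradical filtration without justification. Nothing in the dimension count $\dim H_0=6p$ by itself forces $H_1/H_0$ to have a nonzero component supported on pairs of group-like elements rather than entirely on the $4$-dimensional simple subcoalgebras; producing that skew-primitive is precisely the nontrivial content of the cited result and needs an argument (a Taft--Wilson-type bicomodule decomposition of $H_1$ together with the $G(H)$-action, or the skew-primitive criteria of \cite{BG}). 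Once that element exists, your derivation of $\dim A=4p$ and of the coalgebra splitting $H=A\oplus H_{0,2}$ is fine.

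Parts $(b)$--$(d)$ have more serious problems. For $(b)$, a copy of $H_4$ in $H^*$ dualizes to a Hopf quotient $H\twoheadrightarrow H_4$ of index $2p$, to which Proposition \ref{prop:BGHilNg09} (which requires $\dim H=2\dim A$) does not apply, and no ``tower of quotients'' interpolating between $H$ and $H_4$ is available; the exclusion has to come from elsewhere. For $(c)$, your list of excluded orders omits $|G(H^*)|=1$, which also divides $8p$ and must be ruled out. For $(d)(i)$ you conflate sub and quotient: $A^*$ is a \emph{quotient} of $H^*$, so the $H_4$-summand of the coalgebra $A^*$ yields a nontrivial skew-primitive in $A^*$, not in $H^*$; lifting it along $\pi\colon H^*\to A^*$ is exactly the point at issue and is not automatic. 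Finally, Proposition \ref{prop:BGHilNg09} never ``forces'' the relations $x^2=g^2-1$ and $gx+xg=0$ --- in its nontrivial case it gives $x^2=0$ for a coinvariant skew-primitive --- so to land on $\matha_{8}(-1,1)$ rather than an $8$-dimensional pointed subalgebra with $x^2=0$ you must invoke $(b)$ at this step (a nontrivial $(1,\beta)$-primitive with $\beta$ of order $2$ and square zero would generate a copy of $H_4$ inside $H^*$), which your sketch does not do.
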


Now we strengthen the results above. 
\begin{lem}\label{l:coalg_decomp}
Let $H$ be an $8p$-dimensional nonsemisimple Hopf algebra with  $|G(H)| = 2p$ 
such that $H$ and $H^*$ are nonpointed.    
Then
$H$ contains a pointed normal Hopf subalgebra $A$, isomorphic to either $\matha(-1,1)$ or $\matha(-1,0)$, 
and fits into  
the exact sequence $1 \to A \to H \to \Bbbk C_2 \to 1$,  which has the dual exact sequence
$1 \to \Bbbk C_2 \to H^* \xrightarrow{\pi} A^* \to 1$. 
Furthermore, $G(H^{*}) \simeq C_{4}$ and $H^*$ has a Hopf subalgebra $Y$ isomorphic to $\matha_{8}(-1,1)$
with image in $A^*$ isomorphic to $H_4$.  
\end{lem}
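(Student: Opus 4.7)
The plan is to bootstrap Proposition \ref{prop:bg454647} in three stages: promote the inclusion $A\hookrightarrow H$ to a normal one with two-dimensional cokernel, narrow the four candidates for $A$ down to $\matha(-1,0)$ and $\matha(-1,1)$, and then construct the desired Hopf subalgebra $Y$ inside $H^*$. For the first stage, since $[H:A]=2$, $A$ is automatically normal in $H$ (every index-two Hopf subalgebra is normal), so $H/HA^+$ is a $2$-dimensional Hopf algebra, necessarily $\k C_2$, yielding both exact sequences in the statement.

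For the narrowing step, suppose $A\in\{\matha(-1,0)^{*},\, H_4\otimes \k C_p\}$. The Remark in \S\ref{sec:nonsemis8p} exhibits $H_4$ as a Hopf subalgebra of $A$, so dually $A^*$ admits $H_4$ as a Hopf quotient, and composing with $\pi$ produces a Hopf epimorphism $H^*\twoheadrightarrow H_4$. Its kernel is the augmentation ideal of a normal Hopf subalgebra $K\subseteq H^*$ of dimension $2p$; since Hopf algebras of dimension $2p$ in characteristic zero are group algebras (by Masuoka), $|G(H^*)|\geq|G(K)|=2p\geq 6$, contradicting Proposition \ref{prop:bg454647}(c). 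Hence $A\in\{\matha(-1,0),\matha(-1,1)\}$.

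For the construction of $Y$, both surviving cases have $A^*$ containing a Hopf subalgebra isomorphic to $H_4$: directly from the Remark when $A=\matha(-1,0)$, and from the coalgebra decomposition $A^*\simeq H_4\oplus \M^*(2,\k)^{p-1}$ when $A=\matha(-1,1)$ (verifying that the $H_4$ summand is closed under multiplication). Let $Y:=\pi^{-1}(H_4)\subseteq H^*$, an $8$-dimensional Hopf subalgebra fitting into $\k C_2\hookrightarrow Y\twoheadrightarrow H_4$. By the classification of $8$-dimensional nonsemisimple Hopf algebras, $Y\in\{\matha_8(-1,0),\,\matha_8(-1,0)^*,\,\matha_8(-1,1),\,H_4\otimes \k C_2\}$. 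Proposition \ref{prop:bg454647}(b) excludes $H_4\otimes \k C_2$; a direct computation in $\matha_8(-1,0)^*$ (its unique dimension-$2$ Hopf subalgebra $\k\{1,g^2\}$ fails to be normal because $g^2x=-xg^2$) shows it admits no dimension-$4$ Hopf quotient, excluding it; and Proposition \ref{prop:BGHilNg09} applied to $Y\twoheadrightarrow H_4$, combined with the non-triviality of the skew-primitive in $H^*$ induced from the skew-primitive of $A$, forces $x^2\neq 0$ in $Y$, excluding $\matha_8(-1,0)$. Therefore $Y\simeq \matha_8(-1,1)$, and since $C_4=G(Y)\subseteq G(H^*)$ while $|G(H^*)|\leq 4$ by Proposition \ref{prop:bg454647}(c), we conclude $G(H^*)\simeq C_4$.

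The principal obstacle is this last stage: separating $\matha_8(-1,1)$ from $\matha_8(-1,0)$, since both admit $H_4$ as a Hopf quotient and neither contains $H_4$. The resolution rests on exploiting the specific structure of the lifted skew-primitive via Proposition \ref{prop:BGHilNg09} to rule out $x^2=0$, using that the non-triviality of the skew-primitive in $A^*$ propagates back through $\pi$ to $Y$.
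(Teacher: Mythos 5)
Your opening step is where the proof breaks: it is \emph{not} true that every index-two Hopf subalgebra of a finite-dimensional Hopf algebra is normal. A counterexample sits inside this paper's own list from Remark \ref{rmks:pointed4p-8}: in $\matha(-1,0)=\k\langle g,x\rangle$ the subalgebra $\k\langle g\rangle\simeq\k C_{2p}$ has index $2$, yet
$(\ad x)(g)=xg\,\Sc(1)+g^{2}\Sc(x)=xg-gx=2xg\notin\k\langle g\rangle$,
so it is not normal, and one checks $\matha(-1,0)\,\k\langle g\rangle^{+}\neq\k\langle g\rangle^{+}\matha(-1,0)$, so no exact sequence $\k C_{2p}\to\matha(-1,0)\to\k C_{2}$ exists. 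Normality of $A$ is therefore a genuine theorem in this setting, and the paper has to earn it: applying Proposition \ref{prop:BGHilNg09} to $\pi:H^{*}\to A^{*}$, the coinvariants $R$ are either $\k C_{2}$ or generated by a nontrivial skew-primitive $y$ with $y^{2}=0$; in the latter case $R^{+}H^{*}=yH^{*}$ would be a $4p$-dimensional Hopf ideal contained in $J(H^{*})$, contradicting $\dim J(H^{*})=\dim H-\dim H_{0}=2p$, which follows from the coalgebra decomposition in Proposition \ref{prop:bg454647}(a). Everything downstream of your first sentence (both exact sequences and the pullback used to build $Y$) rests on this unproved normality.

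The later stages have further problems. In stage two, the coinvariants of a Hopf surjection $H^{*}\twoheadrightarrow H_{4}$ form only a coideal subalgebra, not in general a normal Hopf subalgebra, so you cannot conclude they constitute a group algebra of order $2p$. In stage three, $\pi^{-1}(H_{4})$ has dimension $4p+4$, not $8$ (you want the pullback $H_{4}\,\square_{A^{*}}H^{*}$); your list of candidate $8$-dimensional Hopf algebras silently assumes $G(Y)\simeq C_{4}$; and the decisive exclusion of $\matha_{8}(-1,0)$ invokes ``the non-triviality of the skew-primitive in $H^{*}$ induced from the skew-primitive of $A$'' --- but $A$ sits inside $H$, not $H^{*}$, so its skew-primitive yields nothing in $H^{*}$. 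Producing a nontrivial skew-primitive in $H^{*}$ is precisely the crux, and the paper does it by a representation-theoretic count: $H$ is free over $\k G(H)$, so each principal indecomposable projective $H$-module has dimension at most $4$; if every skew-primitive of $H^{*}$ were trivial, $P(\k)$ would have a simple constituent $V$ with $\dim V>1$, forcing $\dim P(V^{*})>4$, a contradiction. With that in hand, Proposition \ref{prop:bg454647}(d) delivers $Y\simeq\matha_{8}(-1,1)$ directly, and the identification of $A$ then \emph{follows from} $\pi(Y)\simeq H_{4}\subseteq A^{*}$ rather than preceding it.
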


\begin{proof}
Let $A$ be the Hopf subalgebra of $H$ given by Proposition \ref{prop:bg454647} $(a)$, and
  write $\pi: H^* \to A^*$ for the induced surjection. 
  By Proposition \ref{prop:BGHilNg09}, $R={H^*}^{\co \pi}$ is either a 
  group algebra of dimension 2 or it is 
  generated by a nontrivial $(1,g)$-primitive $y$ with $y^2=0$ and $2 \mid \ord(g)$. 
  Assume the latter case.
As $R$ is left-normal, it follows that $h_1y\Sc(h_2)\in \Bbbk y$ for all 
$h\in H^*$. Thus, $hy=h_1y\Sc(h_2)h_3=cyk$ with $c\in \Bbbk$ and $k\in H^{+}$, which implies that
$H^*y\subset yH^*$. Therefore, $yH^* = R^+ H^*$ 
  is a Hopf ideal of $H^*$ and $H^*/R^+H^* \cong A^*$. In particular, $\dim R^+ H^* =4p$. 
 As $y^2=0$, it follows that $R^+H^* \subset J(H^*)$. 
  But since $\dim H_0 = 6p$, we have that 
  $\dim J(H^*) = 2p$, a contradiction. 
  Therefore, $R$ is a 2-dimensional normal Hopf subalgebra of $H^*$, and hence $A$ is a 
  normal Hopf subalgebra of $H$.
  
Let $e_0, \dots, e_{2p-1}$ be the primitive idempotents of $K=\k G(H)$. 
  Since $H$ is a free $K$-module, $\dim He_i = 4$. 
  Therefore, $\dim P \le 4$ for all principal projective $H$-modules $P$. 
  Suppose $H^*$ has only trivial skew-primitive elements. 
  Then $P(\k)$ must have a simple constituent $V$ with $\dim V > 1$. 
  This implies $\dim P(V^*) > 4$, 
  giving a contradiction and so showing that $H^{*}$ has a nontrivial skew-primitive. 
  Thus by Proposition \ref{prop:bg454647} $(d)$,
  $H^{*}$ contains a Hopf subalgebra $Y$ isomorphic to $\matha_{8}(-1,1)$.
  By the structure of $\matha_8(-1,1)$ and by Proposition \ref{prop:bg454647} $(c)$, 
  $G(H^*) \simeq \Bbbk C_4$.
  
Write $1 \to \Bbbk \langle \beta \rangle \to H^* \to A^* \to 1$ with $\beta$ a group-like
  element of order $2$. Suppose that $Y$ is generated by the $(1,\alpha)$-primitive $y$, 
  where $\alpha$ has order $4$.  Then $\beta = \alpha^2$ and $\pi(\beta) = 1$. 
  Thus the image of $G(H^*)$ under $\pi$ is $\Bbbk C_2\simeq \Bbbk\langle\pi(\alpha)\rangle$.  
  Since $y \notin H^{*co\pi}$, then $\pi(y) \neq 0$ so that  $\pi(y)$ is a nontrivial $(1,\pi(\alpha))$-primitive.
  and $\pi(A_8(-1,1))$ is isomorphic to $H_4$.   
  Then $A^*$ must contain a Hopf subalgebra isomorphic to $H_4$, so that by Remark \ref{rmks:pointed4p-8}, 
  $A$ is isomorphic to either $\matha(-1,1)$ or $\matha(-1,0)$.
\end{proof}

We end this subsection with the following theorems.

\begin{thm}
Let $H$ be an $8p$-dimensional nonsemisimple Hopf algebra with  $|G(H)| = 2p$ 
such that both $H$ and $H^*$ are nonpointed. 
Then $\ord(\Sc^2)=2p$.
\end{thm}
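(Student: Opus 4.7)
The plan is to combine Lemma~\ref{l:inv1}, applied to each of the $p$ four-dimensional simple subcoalgebras $C_i$ in the decomposition $H = A \oplus \bigoplus_{i=1}^{p} C_i$ given by Proposition~\ref{prop:bg454647}(a), with a direct computation of $\Sc^2$ on $A$. Lemma~\ref{l:inv1} yields $\Sc^4|_{C_i} = L(a^2)|_{C_i}$, where $a$ is the distinguished group-like of $H$; by Lemma~\ref{l:dis_gp}, $a$ is already the distinguished group-like of $A$. Since $\Sc^4 = \id$ on the $4p$-dimensional pointed Hopf algebra $A$ (Remark~\ref{rmks:pointed4p-8}), the order of $\Sc^4$ on all of $H$ is governed entirely by the action of $L(a^2)$ on the $C_i$.

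The previous lemma leaves two possibilities: $A \cong \matha(-1,1)$ or $A \cong \matha(-1,0)$. In the first case $a = g^p$ has order $2$, so $a^2 = 1$ and hence $\Sc^4 = \id$ on $H$, which would force $\ord(\Sc^2) \le 4 < 2p$. Ruling out this case will be the main obstacle, and I would approach it by analyzing the cleft extensions of $\matha(-1,1)$ by $\Bbbk C_2$ and showing that any such extension produces a pointed Hopf algebra, contradicting the hypothesis that $H$ is nonpointed. Assume henceforth $A \cong \matha(-1,0)$, so $a = g$ has order $2p$ and $a^2$ has order $p$.

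To compute $\ord(\Sc^4)$ in this case, decompose $H = \bigoplus_{k=0}^{2p-1} H^{(k)}$ into $L(g)$-eigenspaces $H^{(k)} = \{h \in H : gh = \zeta^k h\}$, with $\zeta$ a primitive $2p$-th root of unity. By Nichols--Zoeller, $H$ is free of rank $4$ over $\Bbbk\langle g\rangle$, so each $H^{(k)}$ has dimension $4$; similarly, $A$ is $\Bbbk\langle g\rangle$-free of rank $2$ and contributes a $2$-dimensional subspace to every $H^{(k)}$. The fixed locus of $L(g^2)$ is $H^{(0)} \oplus H^{(p)}$, of dimension $8$, and its intersection with $\bigoplus_i C_i$ therefore has dimension $8-4=4$. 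Since each $C_i$ is $4$-dimensional and $p \ge 3$, at most one of them can lie entirely inside this fixed locus; hence for some index $j$, $L(g^2)|_{C_j} \ne \id$. As $L(g^2)$ has prime order $p$, the operator $\Sc^4|_{C_j} = L(g^2)|_{C_j}$ has order exactly $p$, so $\ord(\Sc^4) = p$ and $\ord(\Sc^2) \in \{p, 2p\}$.

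To eliminate $\ord(\Sc^2) = p$, I compute $\Sc^2$ directly in $\matha(-1,0)$: from $\Sc(x) = -g^{-1}x$ one obtains $\Sc^2(g) = g$ and $\Sc^2(x) = -x$, so $\Sc^2|_A$ has order $2$. Since $p$ is odd, $(\Sc^2)^p|_A = \Sc^2|_A \ne \id$, which rules out $\ord(\Sc^2) = p$ and yields $\ord(\Sc^2) = 2p$.
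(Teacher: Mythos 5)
The genuine gap is the case $A\cong\matha(-1,1)$, which you yourself flag as ``the main obstacle'' and then do not close. Your proposed remedy --- showing that every extension of $\matha(-1,1)$ by $\Bbbk C_2$ is pointed --- is left entirely unexecuted, and it is not a routine lemma: Lemma~\ref{l:coalg_decomp} explicitly allows $A\cong\matha(-1,1)$ for a nonpointed $H$ of exactly this kind, and Proposition~\ref{prop:bg454647}$(d)(i)$ treats that case as live, so the statement you would need is in tension with the paper's own setup and would require substantial work. The paper's proof never makes this case distinction at all: it invokes Lemma~\ref{l:dis_gp} to identify the distinguished group-like element of $H$ with that of $A$, takes it to be $g$ of order $2p$, and then runs a single argument in which $\Sc^2(e_{ij})=(-1)^{i-j}g\,e_{ij}$ on every $4$-dimensional simple subcoalgebra. (If one insists on reading Remark~\ref{rmks:pointed4p-8} as saying the distinguished group-like of $\matha(-1,1)$ is $g^p$, of order $2$, then the difficulty you identify would equally afflict the paper's argument; but as submitted your proof is incomplete precisely at the step you yourself call the crux, so it does not establish the theorem.)

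Your handling of the case $A\cong\matha(-1,0)$ is correct and genuinely different in mechanism from the paper's. The paper shows $gD\neq D$ for a $4$-dimensional simple subcoalgebra $D$ via a $(\Bbbk\langle g\rangle,H)$-Hopf-module freeness argument, deduces $\Sc^2(D)=gD$ and hence $p\mid\ord(\Sc^2)$, and rules out $\Sc^{2p}|_D=\id$ by observing that $\Bbbk\langle g\rangle$ and $D$ together generate $H$ while $\ord(\Sc^2|_A)=2$. You instead count $L(g)$-eigenspaces to show some $C_j$ is not pointwise fixed by $L(g^2)$, conclude $\ord(\Sc^4)=p$, and finish with the same parity argument on $A$. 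This trades the orbit/generation argument for a transparent dimension count, which is a fair exchange; you should, however, record that $\bigoplus_i C_i$ is $L(g)$-stable (left multiplication by a group-like permutes the $4$-dimensional simple subcoalgebras) so the eigenspace count splits along $H=A\oplus\bigoplus_i C_i$, and that the identity $\Sc^4|_{C}=L(g^2)|_{C}$ from Lemma~\ref{l:inv1} must be iterated along the orbit $C_j, g^2C_j,\dots$ before one may speak of the ``order'' of $\Sc^4|_{C_j}$, since $\Sc^4$ need not preserve $C_j$.
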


\begin{proof}
  By Lemma \ref{l:coalg_decomp} and Remark \ref{rmks:pointed4p-8}, $H$ contains a pointed
  $4p$-dimensional Hopf subalgebra $A$ generated by a group-like element $g$ of order $2p$ 
  and a $(1,g)$-primitive 
  element $x$, and 
  $H = A \oplus H_{0,2}$ as coalgebras. Note that, $\Sc^4|_A = \id_A$ and by  
  Lemma \ref{l:dis_gp}, $g$ is the distinguished group-like element of $H$. 
  Let $D$ be a simple subcoalgebra of dimension 4.  
  By Lemmas \ref{l:order 2} and \ref{l:inv1}, there exists a matrix basis 
  $\{e_{11}, e_{22}, e_{21}, e_{12}\}$ of $D$ 
  such that 
  $\Sc^2(e_{ij}) = (-1)^{i-j} g e_{ij}$. Since $G(H)$ acts on the 4-dimensional simple subcoalgebras of $H$, 
  $g^p D = D$ and so $\Sc^{2p}(D) = D$. 
  If $gD=D$, then $D$ is a left $(\k\langle g \rangle, H)$-Hopf module and hence $D$ must be free over 
  $\k\langle g\rangle$. This is not possible since $\dim D=4$. Therefore,  $g D \ne D$. 
  In particular, $\Sc^2(D) \cap D = 0$ and $p\mid \ord(\Sc^2) \mid 4p$. 
  
  Suppose $\Sc^{2p}|_D= \id$. Then the subalgebra $K$ of $H$ generated by 
  $\k\langle g\rangle\oplus D$ is a sub-bialgebra (and hence a Hopf subalgebra)  
  of dimension  $> 4p$. Therefore, $K=H$ and hence $\Sc^{2p}  =\id$. 
  This implies $\Sc^{2p}|_A=\id$. However, since $A$ is a pointed nonsemisimple 
  Hopf algebra of dimension $4p$,  $\ord(\Sc^2|_A)= 2$, a contradiction.  
  Therefore, $\ord(\Sc^{2}|_{H_{0,2}}) = 2p = \ord(\Sc^2)$.  
\end{proof}

\begin{thm}\label{thm:8pdimssnot8}
	Let $H$ be a Hopf algebra of dimension $2^np$ over $\k$,  where $n\geq 0$ and $p$ is an odd prime.  
	If $H$ has a semisimple Hopf subalgebra of dimension $2^n$, then $H$ is semisimple.
\end{thm}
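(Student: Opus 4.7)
The plan is to argue by contradiction via the associated graded Hopf algebra: assume $H$ is not semisimple, and derive a contradiction.

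\textbf{Step 1 (Chevalley reduction).} First I would reduce to the case $H_0 = K$. Since $H$ is nonsemisimple, Larson--Radford in characteristic zero gives that $H$ is also not cosemisimple, so $H_0 \subsetneq H$. As $K$ is semisimple, hence cosemisimple, $K \subseteq H_0$. A dimensional analysis (using Nichols--Zoeller on the Hopf subalgebras containing $K$, combined with the primality of $p$ and the fact that no Hopf subalgebra of $H$ properly between $K$ and $H$ can serve as the coradical) should force $H_0 = K$, giving the Chevalley property with coradical $K$.

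\textbf{Step 2 (Nichols algebra analysis).} Next I would pass to $\gr H \cong R \# K$, where the diagram $R$ is a coradically graded connected braided Hopf algebra in $\ydK$ of dimension $p$. Setting $V := R(1) = \mathcal{P}(R) \neq 0$, I would apply the braided version of Nichols--Zoeller to $\toba(V) \subseteq R$ to get $\dim \toba(V) \mid p$, hence $R = \toba(V)$. The same freeness applied to any nonzero Yetter--Drinfeld submodule of $V$ forces $V$ simple in $\ydK$; a Hilbert-series analysis of finite-dimensional Nichols algebras of prime dimension then forces $\dim V = 1$.

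\textbf{Step 3 (Arithmetic contradiction).} Finally I would write $V = \k v$ with coaction $\delta(v) = g \otimes v$ and action $k \cdot v = \alpha(k) v$ for some $g \in G(K)$ and $\alpha \in G(K^\ast)$. Then $\toba(V) \cong \k[v]/(v^N)$ with $N = \ord(\alpha(g))$, so $\dim \toba(V) = p$ forces $\ord(\alpha(g)) = p$. On the other hand, $|G(K)|$ and $|G(K^\ast)|$ both divide $\dim K = 2^n$, so $\ord(g)$ and $\ord(\alpha)$ are both powers of $2$; hence $\alpha(g)^{2^n} = \alpha(g^{2^n}) = 1$ shows $\ord(\alpha(g))$ is a power of $2$, contradicting that $p$ is an odd prime.

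The main obstacles will be the two structural steps: forcing $H_0 = K$ in Step 1, and forcing $\dim V = 1$ in Step 2 (which invokes nontrivial structure theory for Nichols algebras of prime dimension in a semisimple Yetter--Drinfeld category). Once these two reductions are in hand, Step 3's conclusion is an immediate order-arithmetic clash between the odd prime $p$ and the $2$-power orders of the grouplikes and characters of $K$.
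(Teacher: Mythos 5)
Your Step 1 contains the decisive gap: nothing forces $H_0=K$. The coradical $H_0$ is only a subcoalgebra, so Nichols--Zoeller does not apply to it directly; what a dimension count actually gives is that the Hopf subalgebra $K'$ \emph{generated by} $H_0$ satisfies $2^n=\dim K\mid\dim K'\mid 2^np$, hence $K'=K$ (whence $H_0=K$) \emph{or} $K'=H$. The second alternative --- $H$ nonsemisimple, not cosemisimple, but generated as an algebra by a coradical strictly between $K$ and $H$ --- is not ruled out by anything you say, and in that case there is no decomposition $\gr H\cong R\#H_0$ with $\dim R=p$, so Steps 2 and 3 never get off the ground. (Such Hopf algebras genuinely exist; the paper's own Section 4 studies $24$-dimensional Hopf algebras generated by a non-Hopf coradical.) Step 2 also overreaches: granting $R=\toba(V)$ with $\dim\toba(V)=p$ and $V$ simple, Poincar\'e duality of the Hilbert series only yields $1+\dim V\le p$ when the top degree is $1$, $2+\dim V\le p$ when it is $2$, etc., so palindromicity alone does not force $\dim V=1$; you would need genuine structure theory of finite-dimensional Nichols algebras over the semisimple Hopf algebra $K$ (which need not be a group algebra). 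Your Step 3 arithmetic is fine once $\dim V=1$ is in hand, but the two reductions feeding it are exactly the unproved parts.

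For comparison, the paper avoids the coradical filtration entirely and splits into three cases. If $G(H)$ contains an element $a$ of order $p$, then $K\oplus\k\langle a\rangle$ generates $H$ and $\Sc^2$ is the identity on it, so $\Sc^2=\id_H$. If $G(H^*)$ contains an element of order $p$, it spans $(H^*)^{\co\pi}$ for $\pi:H^*\to K^*$, giving an exact sequence $1\to\k C_p\to H^*\to K^*\to 1$ with semisimple ends. Otherwise Radford's formula makes $\ord(\Sc^2)$ a power of $2$, and with $e$ the normalized integral of $K$ one has $\dim He=p$ odd, so $\Tr(\Sc^2|_{He})\neq 0$ by \cite[Lemma 1.8]{HilNg09}, contradicting $\Tr(\Sc^2\circ R(e))=0$ from \cite[Lemma 3]{RadSch00} for nonsemisimple $H$. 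You would do better to adopt this trace argument, or else supply a real proof that the case $K'=H$ cannot occur.
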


\begin{proof}
	Suppose $H$ has a semisimple Hopf subalgebra $K$ of dimension
	$2^n$, and let $\Sc$ be the antipode of $H$. 
	We consider three cases to show that $H$ must be semisimple.

	First, suppose that $H$ contains a group-like element $a$ of order $p$.    
	Define $\overline{K}$ to be the $\k$-subalgebra of 
	$H$ generated by $C=K\oplus\k\left\langle a \right\rangle$.  
	Then, $\overline{K}$ is a Hopf subalgebra and $\dim{\overline{K}}>2^n$, and so $\overline{K}=H$.  
	Further, since $C$ is the direct sum of two semisimple Hopf algebras as a vector space, 
	$\Sc^2|_C=\id_C$ and so $S^2=\id_H$. Hence $H$ must be semisimple.

	 Next suppose that $H^*$ contains a group-like element $\a$ of order $p$. 
	 Let $\pi: H^* \to K^*$  be the Hopf algebra surjection dual to the inclusion of $K$ in $H$. 
	 Since $\ord(\pi(\a)) \mid \dim K$, $\pi(\a)=1$ and so 
	 $\a\in(H^*)^{\text{co} \pi}$. Thus $\k\left\langle\a\right\rangle\subset(H^*)^{\text{co} \pi}$.  
	 However, since $\dim (H^*)^{\text{co} \pi}= p$, we actually have 
	 $\k\left\langle\alpha\right\rangle=(H^*)^{\text{co} \pi}$ and thus the exact sequence
		\begin{equation*}
			1 \rightarrow \Bbbk C_p\rightarrow H^*\rightarrow K^*\rightarrow 1
		\end{equation*}
of Hopf algebras. Since $\Bbbk C_p$ and $K^*$ are semisimple, so is $H^*$ and thus so is $H$.

	Finally suppose that neither $H$ nor $H^*$ has any group-like element of order $p$. By 
	Radford's formula, we have  $(\Sc^4)^{2n}=\id_H$.  In particular, $\ord(\Sc^2)$ is a 2-power.  
	Let $e$ be the normalized integral of $K$.  Since $H$ is a free right $K$-module,  $\dim He=p$.
	
Suppose $H$ is not semisimple.  It follows from \cite[Lemma 3]{RadSch00} that $\Tr(\Sc^2\circ R(e))=0$ 
and hence $\Tr(\Sc^2|_{He})=0$.  However, we know $\dim{He}=p$ and the order of $\Sc^2$ is a 2-power, so 
by \cite[Lemma 1.8]{HilNg09}, $\Tr(\Sc^2|_{He})\ne 0$, a contradiction!  Therefore, $H$ is semisimple.
\end{proof}

\subsection{Nonsemisimple Hopf algebras of dimension $8p$ with the Chevalley Property}
Throughout this subsection, we assume that $H$ is a Hopf algebra of dimension $8p$, $p$ an odd prime.
If $H$ has the Chevalley property,  by the Nichols-Zoeller Theorem, the dimension $n$
of $H_0$ must be a divisor of $8p$, \textit{i.e.},  $n \in \{2,4,8, p,2p,4p,8p   \}$.
If $H$ is not pointed, since
the only semisimple Hopf algebras of dimension $2,4,p$ are group algebras, it follows that $n \in \{8, 2p,4p\}$.

Assume $L$ is a semisimple Hopf algebra of dimension $8$. By \cite{Mk95}, $L$ is isomorphic to one of the 
following Hopf algebras: 
\begin{itemize}
\item $\Bbbk C_{8}$, $\Bbbk (C_{2} \times C_{4})$,
$\Bbbk (C_{2} \times C_{2} \times C_{2})$ (abelian case),
\item $\Bbbk Q_{8}$, $\Bbbk\Dih_{4}$, where $Q_8$ is the quaternion group (nonabelian case),
\item $\Bbbk^{Q_{8}}$,  with  $\Bbbk^{Q_{8}}  = \Bbbk C_{4} \oplus
\coM$ as coalgebras (dual of non-abelian case),
\item $\Bbbk^{\Dih_{4}}$,  with  $\Bbbk^{\Dih_{4}}
= \Bbbk C_{4} \oplus \coM$ as coalgebras (dual of non-abelian case), or
\item $B_{8} =  \Bbbk(C_{2} \times C_{2}) \oplus \coM$ as
coalgebras, see the definition in Remarks \ref{rmk:semisimple4p} and \ref{rmk:B8}.
\end{itemize}

\par Also recall that every Hopf algebra of dimension $2p$ is a group
algebra or isomorphic to $\Bbbk^{\Dih_{p}}$, the
dual of the group algebra on the dihedral group of order $2p$.

\begin{prop}\label{prop:chev-prop-cor-4p}
Let $H$ be a nonsemisimple nonpointed  Hopf algebra of dimension $8p$.  
If $H$ has the Chevalley property, then $\dim H_0 = 4p$.
\end{prop}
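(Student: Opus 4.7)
The plan is to exploit the Chevalley property to restrict $\dim H_0$ to a short list and then eliminate every value except $4p$. Since $H_0$ is a semisimple Hopf subalgebra, the Nichols--Zoeller theorem forces $\dim H_0\mid 8p$; nonsemisimplicity of $H$ excludes $\dim H_0=8p$, and the hypothesis that $H$ is nonpointed forces $H_0$ not to be a group algebra, which rules out $\dim H_0\in\{1,2,4,p\}$ since the only Hopf algebras of these dimensions are group algebras. Therefore $\dim H_0\in\{8,2p,4p\}$, and it remains to exclude the first two values.

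The case $\dim H_0=8$ is handled immediately by Theorem~\ref{thm:8pdimssnot8}: $H_0$ would be a semisimple Hopf subalgebra of dimension $2^{3}$ of the Hopf algebra $H$ of dimension $2^{3}p$, forcing $H$ to be semisimple and contradicting our hypothesis.

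For $\dim H_0=2p$, the only nonpointed semisimple Hopf algebra of that dimension is $\Bbbk^{\Dih_p}$, so $H_0\simeq\Bbbk^{\Dih_p}$ and $\gr H\simeq R\# H_0$ with $R$ a graded connected braided Hopf algebra of dimension $4$ in $\ydho$. Let $V=R(1)$ be the infinitesimal braiding. The Nichols algebra $\toba(V)$ is a braided Hopf subalgebra of $R$, and $V=0$ would force $R=\Bbbk$, contradicting $\dim R=4$; hence $V\neq 0$, and since $H_0$ is semisimple $V$ decomposes as a direct sum of simple Yetter--Drinfeld modules and contains at least one simple summand $W$ with $\toba(W)\hookrightarrow\toba(V)\hookrightarrow R$. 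Using the standard equivalence $\ydho\simeq{}^{\Bbbk\Dih_p}_{\Bbbk\Dih_p}\mathcal{YD}$, I would then enumerate the simple YD-modules over $\Bbbk\Dih_p$ (indexed by the conjugacy classes of $\Dih_p$ together with irreducibles of the corresponding centralizers) and check that $\dim\toba(W)>4$ or $\toba(W)$ is infinite-dimensional in every case. Since $Z(\Dih_p)$ is trivial for odd $p$, every $1$-dimensional YD-module (and more generally every YD-module supported on the class $\{e\}$) carries the flip braiding and hence has a polynomial, infinite-dimensional Nichols algebra. The $2$-dimensional YD-modules attached to a rotation class $\{a^k,a^{-k}\}$ admit a diagonal braiding with entries in the group of $p$-th roots of unity, and a direct computation shows that the resulting generalized Dynkin diagram is not of finite type; Heckenberger's classification~\cite{H} (already used in Lemma~\ref{lem:nichols-infty}) then forces the Nichols algebra to be infinite-dimensional. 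The $p$-dimensional YD-module attached to the reflection class gives $\dim\toba(W)\ge p+1>4$ for $p\ge 5$, while for $p=3$ one lands in a Fomin--Kirillov-type situation with $\dim\toba(W)=12>4$. In every case we reach a contradiction, so $\dim H_0\ne 2p$.

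The main obstacle is the third paragraph: the case $\dim H_0=2p$ genuinely requires analyzing the diagram $R$ as a braided Hopf algebra in the Yetter--Drinfeld category over $\Bbbk^{\Dih_p}$ and invoking Heckenberger's classification to rule out every admissible infinitesimal braiding. The elimination of $\dim H_0=8$ reduces to a direct appeal to Theorem~\ref{thm:8pdimssnot8}, and once both cases are excluded we are left with $\dim H_0=4p$.
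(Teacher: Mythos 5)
Your proposal is correct and follows the paper's proof essentially step for step: Nichols--Zoeller plus nonpointedness to reduce to $\dim H_0\in\{8,2p,4p\}$, Theorem~\ref{thm:8pdimssnot8} to exclude $8$, and an analysis of the diagram $R$ of dimension $4$ over $\Bbbk^{\Dih_p}$ to exclude $2p$, ending with the same dimension count on the Nichols algebra of the reflection-class module. The only (minor) divergence is in the $2p$ case, where you propose to enumerate all simple Yetter--Drinfeld modules over $\Dih_p$ and rule out the trivial and rotation classes by hand via Heckenberger's classification of diagonal braidings, whereas the paper short-circuits this by citing \cite[Theorem 4.8]{AHS}, which already says that for $\Dih_n$ with $n$ odd the reflection class is the only one supporting a finite-dimensional Nichols algebra.
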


\begin{proof}
 Since $H$ is nonsemisimple and nonpointed, we have that $\dim H_{0}$
cannot be $1,2,4,p$ or $8p$. Further, by Theorem \ref{thm:8pdimssnot8}, we also 
have that $\dim H_{0} \neq 8$. 
We show that $2p$ is also
impossible.

Assume $\dim H_0 = 2p$. Since $H$ is not pointed, we must 
have that $H_0 \cong \Bbbk^{\Dih_{p}}$.
Then $\gr H \cong R \# \Bbbk^{\Dih_{p}}$ with $R$ a connected graded Hopf algebra in 
$_{\Bbbk^{\Dih_{p}}}^{\Bbbk^{\Dih_{p}}}\yd$.
Let $V=R(1)$ be the vector space of homogeneous
elements of degree one and consider the subalgebra $\toba(V)$ of $R$ generated by $V$; it is a Nichols
algebra of dimension less than or equal to $4$. Since $_{\Bbbk^{\Dih_{p}}}^{\Bbbk^{\Dih_{p}}}\yd
\simeq\ _{\Dih_{p}}^{\Dih_{p}}\yd$ as braided tensor categories, $\toba(V)$ corresponds
to a Nichols algebra $\toba(W)$ of dimension less or equal than $4$ in $_{\Dih_{p}}^{\Dih_{p}}\yd$. 
For dihedral groups $\Dih_n$ with $n$ odd, 
by \cite[Theorem 4.8]{AHS} the only possible finite-dimensional Nichols algebra 
over a simple module in $_{\Dih_{n}}^{\Dih_{n}}\yd$ comes from the conjugacy class
$\{a, ay,ay^2,..., ay^{n-1}\}$ in $\Dih_n$
and the braiding induced from the category. 
So, if $W\in\ _{\Dih_{p}}^{\Dih_{p}}\yd$ and $\toba(W)$ is finite-dimensional, then $W$ 
must contain a simple module isomorphic to 
this simple module of dimension $p$. As $\toba(W)$ is graded and the
first two homogeneous components satisfy that 
$\toba(W)^{0}=\Bbbk$, $\toba(W)^{1}=W$, we have that $\dim \toba(W) > p$. 
But moreover, if the top degree of $\toba(W)$ is $m$, then by the Poincar\'e duality
$\dim \toba(W)^{m-i}= \dim \toba(W)^{i}$. So, at least, $\dim \toba(W) \geq 2 + p $, which is impossible.
\end{proof}

In the remainder of this subsection we first recall the construction of a family of Hopf 
algebras of dimension $8p$ due to \cite{CDMM}, and prove that 
any nonsemisimple, nonpointed and nonbasic Hopf 
algebra of dimension $8p$ with the Chevalley property is isomorphic to a member of this family.

\begin{obs}\label{rmk:semisimple4p}
 For $p \equiv 3 \mod 4$ the semisimple Hopf algebras of dimension $4p$ are:
 \begin{itemize}
 \item $\Bbbk \Gamma$ with $\Gamma$
 an abelian group of order $4p$;
 \item $\Bbbk \Dih_{2p}$ and $\Bbbk^{\Dih_{2p}}$, where
 $\Bbbk^{\Dih_{2p}} \cong \Bbbk [C_2 \times C_2] \oplus \coM^{p-1}$ as coalgebras;
 \item $\Bbbk \Dic_p$ and $\Bbbk^{\Dic_p}$, where 
 $\Bbbk^{\Dic_p} \cong \Bbbk C_4 \oplus \coM^{p-1}$ as coalgebras;
 \item $ A_{4p}$,  with  $A_{4p}  = \Bbbk C_{4}  \oplus \coM^{p-1}$
as coalgebras and $A_{4p} \simeq A_{4p}^{*}$ (non-trivial self-dual case);
\item $B_{4p}$,  with $B_{4p}  = \Bbbk (C_{2} \times C_{2})  \oplus
\coM^{p-1}$ as coalgebras and $B_{4p} \simeq  B_{4p}^{*}$.
 \end{itemize}

Here, $A_{4p}$ denotes the $\k$-algebra generated by the elements $a$, $s_{+}$ and $s_{-}$
satisfying the relations
\begin{equation}\label{eq:relA4p}
 a^{2}=1,\qquad as_{\pm} = s_{\pm} a, \qquad s^{2}_{\pm} =1, \qquad (s_{+}s_{-})^{p}=1.  
\end{equation}
Let $e_0 = (1+a)/2$ and $e_1 = (1-a)/2$. Then $A_{4p}$ is a Hopf 
algebra with 
\begin{align*}
\com(a)&=a\ot a,& \eps(a)&=1,&\cS(a)&=a\\
\com(s_{\pm})&=s_{\pm}\ot e_{0}s_{\pm} + s_{\mp}\ot e_{1}s_{\pm},& \eps(s_{\pm})&=1,&
\cS(s_{\pm})&=e_{0}s_{\pm} + e_{1}s_{\mp}.
\end{align*}
On the other hand, $B_{4p}$  is the Hopf algebra defined in the same way, but replacing the
last relation in \eqref{eq:relA4p} by $(s_{+}s_{-})^{p}=a$. For more details, see \cite[Definition 3.3]{Mk00}.  
These Hopf algebras were first constructed
by Gelaki, see \cite{Gel1}.

If $p \equiv 1 \mod 4$ then there is an additional nonabelian group of
order $4p$ given by the semidirect product $\Ga_{4p} = C_p \rtimes_\chi C_4$, see Subsection
\ref{subsec:gamma4p}.
 If $C_p = \langle y \rangle$ and $C_4 = \langle x\rangle$ then $\chi(x)(y) = y^{(p-1)/2}$.
\end{obs}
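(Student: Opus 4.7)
The plan is to invoke and refine the standard classification machinery for semisimple Hopf algebras of dimension $pq^2$ (with $q=2$), developed by Masuoka and complemented by work of Gelaki, and specialize to the regime $p \equiv 3 \pmod 4$.

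First, by the Nichols--Zoeller theorem, both $|G(H)|$ and $|G(H^*)|$ divide $4p$. If $|G(H)|=4p$, then $H = \Bbbk G(H)$ is a group algebra; the groups of order $4p$ for $p\equiv 3\pmod 4$ are exactly $C_{4p}$, $C_2\times C_{2p}$, $\Dih_{2p}$, and $\Dic_p$. The congruence condition is essential here: $(\Z/p)^{\times}$ contains no element of order $4$, so the semidirect product analogue $\Ga_{4p}$ does not exist. Dually, the case $|G(H^*)|=4p$ produces the function algebras $\Bbbk^{\Dih_{2p}}$ and $\Bbbk^{\Dic_p}$, the abelian duals being already accounted for among the group algebras.

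Second, for the remaining cases, I would appeal to the theory of abelian Hopf algebra extensions (Kac--Masuoka): any semisimple $H$ of dimension $4p$ not of the previous form fits into an exact sequence $\Bbbk^F\hookrightarrow H\twoheadrightarrow \Bbbk L$ arising from a matched pair $(F,L)$ with $|F|\cdot|L|=4p$. The admissible pairs $(|F|,|L|)$ are constrained by the factorizations of $4p$ and by the existence of a normal Hopf subalgebra, and for each such pair one computes $\opext(\Bbbk L,\Bbbk^F)$ via Kac's spectral sequence. The non-trivial cohomology classes turn out to produce exactly $A_{4p}$ and $B_{4p}$, whose explicit presentations from \eqref{eq:relA4p} and its $B$-analogue realize these classes. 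The coalgebra decompositions $A_{4p}\cong \Bbbk C_4\oplus \coM^{p-1}$ and $B_{4p}\cong \Bbbk(C_2\times C_2)\oplus \coM^{p-1}$ are then read off by identifying the group-like subalgebra from the presentation and recovering the remaining four-dimensional simple subcoalgebras from the extension structure via Peter--Weyl. Self-duality $A_{4p}\cong A_{4p}^*$ and $B_{4p}\cong B_{4p}^*$ is verified by constructing an explicit non-degenerate Hopf pairing on each.

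The main obstacle is establishing completeness: ruling out any further semisimple Hopf algebras of dimension $4p$ not coming from the above extension construction. This reduces to showing that every such $H$ admits a normal Hopf subalgebra of dimension $2$ or $p$ (so that the Kac--Masuoka framework applies), followed by a careful cohomological enumeration of all matched-pair extensions for groups of order $4p$ and verification that no new Hopf algebras beyond $A_{4p}$ and $B_{4p}$ arise. The condition $p\equiv 3\pmod 4$ is essential throughout, as it both excludes $\Ga_{4p}$ from the list of groups and limits the primitive roots of unity available in $\Bbbk$ that could otherwise support additional non-trivial cocycles.
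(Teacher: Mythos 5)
The paper does not actually prove this statement: it is a Remark that recalls the known classification, deferring to Masuoka \cite[Definition 3.3]{Mk00} for the presentations of $A_{4p}$ and $B_{4p}$ and to Gelaki \cite{Gel1} for their original construction, with completeness resting on the literature on semisimple Hopf algebras of dimension $pq^2$. Your outline is faithful to how that classification is obtained in those sources --- Nichols--Zoeller plus the group-theoretic enumeration for the commutative and cocommutative cases, and abelian extensions $\Bbbk^F \hookrightarrow H \twoheadrightarrow \Bbbk L$ with an $\opext$ computation producing exactly $A_{4p}$ and $B_{4p}$ --- so methodologically you are on the same track as the references the paper leans on.

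As a self-contained proof, however, your proposal has a genuine gap precisely at the step you label ``the main obstacle'': you never argue that every semisimple Hopf algebra of dimension $4p$ which is neither commutative nor cocommutative must be an abelian extension, i.e.\ must possess a proper normal Hopf subalgebra to which the Kac--Masuoka machinery applies. That is the substantive theorem here (it requires class-equation and character-degree arguments in the style of Kac--Zhu, Masuoka and Natale, together with the Nichols--Richmond theorem to control $2$-dimensional simple comodules); without it, the cohomological enumeration shows only that the listed algebras occur, not that the list is exhaustive. A second, smaller issue: your claim that the groups of order $4p$ with $p \equiv 3 \pmod 4$ are exactly $C_{4p}$, $C_2\times C_{2p}$, $\Dih_{2p}$ and $\Dic_p$ fails for $p=3$, where the Sylow $p$-subgroup need not be normal and $\Alt_4$ appears; so both the statement and your argument should either exclude $p=3$ or account for $\Bbbk \Alt_4$ and $\Bbbk^{\Alt_4}$.
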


\begin{obs} \label{rmk:B8}
If $p=2$, the definition of 
$B_{4p}$ above still makes sense and $B_{8}$ 
is the semisimple Hopf algebra of dimension 8 due to Kac and Paljutkin.
\end{obs}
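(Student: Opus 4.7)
The plan is to verify two things: first, that specializing the definition of $B_{4p}$ to $p=2$ really produces a well-defined Hopf algebra; and second, that the resulting object is the Kac--Paljutkin algebra. The second point I would deduce from the classification of semisimple Hopf algebras of dimension $8$ cited earlier as \cite{Mk95}, which asserts that the Kac--Paljutkin algebra is the unique semisimple Hopf algebra of dimension $8$ that is neither commutative nor cocommutative. So the core of the proof is to establish that $B_8$ is an $8$-dimensional Hopf algebra which is semisimple, noncommutative, and noncocommutative.

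First I would check that the algebra defined by $a^2=1$, $as_{\pm}=s_{\pm}a$, $s_\pm^2=1$, and $(s_+s_-)^2 = a$ has dimension exactly $8$, by exhibiting the spanning set $\{1,a,s_+,s_-,as_+,as_-,s_+s_-,s_-s_+\}$ and showing independence; the relation $(s_+s_-)^2=a$ both closes off products of length four or more and forces noncommutativity (otherwise $(s_+s_-)^2 = s_+^2 s_-^2 = 1 \ne a$). Next I would verify that the formulas for $\Delta$, $\varepsilon$ and $\mathcal{S}$ descend to $B_8$. With $e_0,e_1 = (1\pm a)/2$ central orthogonal idempotents satisfying $\Delta(e_0)=e_0\ot e_0 + e_1\ot e_1$ and $\Delta(e_1)=e_0\ot e_1+e_1\ot e_0$, the compatibility of $\Delta$ with $s_{\pm}^2=1$ is routine, and the only delicate check is $\Delta((s_+s_-)^2)=\Delta(a)=a\ot a$, which I would expand directly using the formula for $\Delta(s_{\pm})$ and the centrality of $a$. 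The antipode and counit axioms follow by analogous direct verifications.

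Having established that $B_8$ is an $8$-dimensional Hopf algebra, I would argue it is semisimple: the coradical contains the group algebra $\Bbbk(C_2\times C_2) = \Bbbk\{e_0,e_1\}[a]$ together with the $4$-dimensional simple subcoalgebra spanned by $s_+,s_-,as_+,as_-$ coming from the formula $\Delta(s_\pm)=s_\pm\ot e_0 s_\pm + s_\mp\ot e_1 s_\pm$, giving $\dim B_{8,0}\ge 4+4=8$. Thus $B_8=(B_8)_0$ is cosemisimple, hence semisimple. Noncommutativity was already observed, and noncocommutativity is immediate from the asymmetric form $\Delta(s_+)=s_+\ot e_0 s_+ + s_-\ot e_1 s_+$, since applying the flip produces $e_0 s_+\ot s_+ + e_1 s_+ \ot s_-$, which differs from $\Delta(s_+)$.

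By Masuoka's classification \cite{Mk95}, the only semisimple Hopf algebra of dimension $8$ that is simultaneously noncommutative and noncocommutative is the Kac--Paljutkin algebra, so $B_8$ is isomorphic to it. The only real obstacle is the bookkeeping in verifying $\Delta((s_+s_-)^2)=a\ot a$: in the general definition of $B_{4p}$ the relation $(s_+s_-)^p=a$ is compatible with $\Delta$ for all odd primes $p$, and the $p=2$ case needs to be checked separately since then $p$ is even and the product $s_+s_-$ has order $4$ rather than $2p$; this parity check is the one place where the $p=2$ verification is not formally subsumed by the odd-prime case written in \cite{Mk00}.
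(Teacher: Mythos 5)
The paper offers no proof of this remark---it is stated as a known fact, with the coalgebra identification $B_8\cong\Bbbk(C_2\times C_2)\oplus\coM$ already recorded in the dimension-$8$ list and the construction deferred to \cite{Mk00}---so your strategy of direct verification followed by an appeal to Masuoka's classification \cite{Mk95} of semisimple Hopf algebras of dimension $8$ is exactly the standard route, and its overall structure is sound. One step is incomplete, however: you justify cosemisimplicity by asserting that the coradical contains the group algebra $\Bbbk(C_2\times C_2)$, which you identify as ``$\Bbbk\{e_0,e_1\}[a]$''; but $\mathrm{span}\{e_0,e_1\}=\mathrm{span}\{1,a\}$ is only $2$-dimensional, so together with the $4$-dimensional simple subcoalgebra spanned by $s_\pm$, $as_\pm$ you have only shown $\dim (B_8)_0\ge 6$, not $8$. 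To close this you must exhibit the two remaining group-like elements, namely $(e_0\pm\sqrt{-1}\,e_1)\,s_+s_-$ (the analogue of the group-like $g=(e_0+\sqrt{-1}e_1)x^p$ in the paper's description of $\Bbbk^{\Dic_p}$): a short computation using $\Delta(e_0)=e_0\ot e_0+e_1\ot e_1$, $\Delta(e_1)=e_0\ot e_1+e_1\ot e_0$ and $\Delta(s_+s_-)=s_+s_-\ot e_0s_+s_-+s_-s_+\ot e_1s_+s_-$ shows these are group-like, and $\bigl((e_0+\sqrt{-1}e_1)s_+s_-\bigr)^2=(e_0-e_1)(s_+s_-)^2=a\cdot a=1$, so together with $1$ and $a$ they span the $4$-dimensional group-like part and force $(B_8)_0=B_8$. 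With that supplement the rest of your argument---the dimension count via the underlying dihedral group of order $8$, the compatibility check $\Delta((s_+s_-)^2)=a\ot a$, noncommutativity from $(s_+s_-)^2=a\ne 1$, noncocommutativity from the asymmetry of $\Delta(s_+)$, and the final appeal to the uniqueness of the noncommutative, noncocommutative semisimple Hopf algebra of dimension $8$---is correct.
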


Let $N > 1$ be an integer and $q \in \k$ a primitive $N$th root of 1. Recall that for $n\in \N$ 
$$(n)_{q} =\frac{q^{n}-1}{q-1}, \qquad (n)_{q}! = (n)_{q}(n-1)_{q}\cdots (1)_{q}.$$
Let $R_{q}$ denote
the graded algebra $\Bbbk[y]/(y^{N})$ endowed with a coalgebra structure such that
$$d_{0} = 1,\qquad
d_{1} = y,\qquad
d_{2} =\frac{y^{2}}{(2)_{q}!},\quad
\ldots,\quad
d_{N-1} =\frac{y^{N-1}}{(N-1)_{q}!},$$
form a divided power sequence in the sense that
$\com(d_{n} ) = \sum_{i=0}^{n} d_{i}\ot d_{n-i}$ for $0\leq n <N$. The homogeneous component $R_{q} (n)$ is $\Bbbk y^{n} (= \Bbbk d_{n} )$ and is $0$ if
$n\geq N$.

 \begin{definition}\cite[Definition 2.1]{CDMM}\label{def:YD-datum}
A Yetter-Drinfeld (or YD) datum for $R_{q}$, is a triple
$(L, g, \chi)$ which consists of a Hopf algebra $L$ with bijective antipode, a group-like element
$g$ in
$L$ and an algebra map $\chi: L \rightarrow \Bbbk$ such that $\chi(g) = q$ and
$$(\chi \rightharpoonup h)g = g(h\leftharpoonup \chi) \text{ for all }
h \in L.$$
\end{definition}

Using the YD-datum $(L, g, \chi)$ for $R_q$, one can define the $L$-action and $L$-coaction on 
$R_q$ by:
$$
h\cdot y^n = \chi^n(h) y^n \quad\text{and}\quad \delta(y^n)=g^n \o y^n, \quad \text{for } n=0,\dots, N-1,
$$
so that $R_q$ is a graded braided Hopf algebra in $^{L}_{L}\mathcal{YD}$. Conversely, 
it can be seen that any structure that makes $R_{q}$
into a graded braided Hopf algebra in $^{L}_{L}\mathcal{YD}$ arises uniquely from a YD datum for $R_{q}$,
see \cite{andrussch}.
Note that by the above definition $q \neq 1$ and so $g \neq 1$.  Thus to have a YD datum, it is
necessary to have $g \neq 1$ in the center of $G(L)$.

\begin{exas}\label{ex:dicyclic}
$(a)$
Let $\Bbbk^{\Dic_{p}}$ be the function algebra on the dicyclic group of order
$4p$. 
By \cite{CDMM}, this commutative Hopf algebra is generated by the elements $a$, $x$ satisfying:
\begin{align*}
a^{2} & = 1, & \eps(a)&=1,& \com(a)& =a\ot a,& \Sc(a) & = a,\\
x^{2p} & = 1, & \eps(x)&=1,& \com(x) & =  x\ot e_{0}x + ax^{-1}\ot e_{1}x,
 &
\Sc(x) & = e_{0}x^{-1} + e_{1}x.
\end{align*}
Here, $e_0 = (1+a)/2$,  $e_1 = (1-a)/2$ and
$g = (e_{0} + \sqrt{-1}e_{1})x^{p} $ is a group-like
element of order 4 that generates $G(\Bbbk^{\Dic_{p}})$.

A YD-Datum for $R_{-1} = \Bbbk[y]/(y^{2})$ is given by the triple $(\Bbbk^{\Dic_{p}}, g,\chi)$, where
$\chi: \Bbbk^{\Dic_{p}} \to \Bbbk $ is the
algebra map defined by $\chi(g) = -1$, $\chi(a)=1$ and $\chi(x) = -1$.

$(b)$ Consider the semisimple Hopf algebra $A_{4p}$ from Remark \ref{rmk:semisimple4p}.
There exist two YD-Datum for $R_{-1} = \Bbbk[y]/(y^{2})$, see \cite[Theorem 4.3]{CDMM}, which are
isomorphic to each other. They are given by the pairs $(s_{+}(p),\chi_{2})$ and $(s_{+}(p)a, \chi_{3})$, where
$s_{+}(p)=s_{+}s_{-}s_{+}s_{-}\cdots $ ($p$-times) $\in G(A_{4p})$ and $\chi_{2}$, $\chi_{3}$ 
are defined by $\chi_{2}(a)= -1$, $\chi_{2}(s_{\pm})=1$ and
$\chi_{3}(a)=-1=\chi_{3}(s_{\pm})$.
\end{exas}

\begin{lem}\label{lem:RYD-datum}
Let $H$ be a nonsemisimple Hopf algebra of dimension $2n$ with antipode $\Sc$. 
Assume that $H$ has a semisimple Hopf subalgebra $K$ of dimension $n$. Then,
\begin{enumerate}
 \item[$(a)$] $H$ has the Chevalley property with $K=H_0$, $\ord(\Sc^2)=2$ and $n$ must be even.
\item[$(b)$] Consider the graded Hopf algebra $\gr H= R\# K$. Then  $\dim R = 2$, 
$R \cong R_{-1}$ as $\k$-algebras, 
and $R$ admits a YD-datum structure $(K, g, \chi)$.
\end{enumerate}
\end{lem}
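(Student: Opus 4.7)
The plan is to first establish that $H_0 = K$, which simultaneously yields the Chevalley property in part (a) and provides the biproduct decomposition required for part (b). Since $K$ is a semisimple Hopf algebra over a field of characteristic zero, it is also cosemisimple by Larson-Radford, so $K \subseteq H_0$. By Nichols-Zoeller, $n = \dim K$ divides $\dim H_0$, which in turn divides $\dim H = 2n$. If $\dim H_0 = 2n$, then $H$ is cosemisimple and hence semisimple, contradicting the hypothesis; therefore $H_0 = K$, proving that $H$ has the Chevalley property.

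For part (b), I would work with the Radford biproduct $\gr H = R \# K$, where $R$ is the connected graded braided Hopf algebra in ${}^K_K\yd$ called the diagram of $H$. A dimension count gives $\dim R = \dim H / \dim K = 2$. Since $H \neq H_0$ forces $H_1 \neq H_0$, we have $\gr H(1) \neq 0$, so $\dim R(1) = 1$ and $R(n) = 0$ for $n \geq 2$. Choosing $0 \neq y \in R(1)$, the element $y$ is primitive with $y^2 \in R(2) = 0$, so $R \cong \k[y]/(y^2) = R_{-1}$ as algebras. The one-dimensionality of $R(1)$ encodes the $K$-action by an algebra map $\chi : K \to \k$ and the $K$-coaction by a group-like $g \in G(K)$; the relation $y^2 = 0$ in the braided algebra translates, via $c(y \ot y) = \chi(g)(y \ot y)$, to $\chi(g) = -1$, while the Yetter-Drinfeld axiom on $\k y$ becomes precisely the compatibility $(\chi \rightharpoonup h) g = g(h \leftharpoonup \chi)$ for all $h \in K$. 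Thus $(K, g, \chi)$ is a YD datum for $R_{-1}$, and by \cite{andrussch} the entire braided Hopf algebra structure on $R$ is recovered from this datum.

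With the YD datum in hand, the remaining claims in part (a) follow. Since $\chi(g) = -1$, the group-like $g$ has even order, and Nichols-Zoeller applied to $\k \langle g \rangle \subseteq K$ forces $\ord(g)$ to divide $n$, so $n$ is even. To compute $\ord(\Sc^2)$, I would first work on $\gr H$: $\Sc^2_{\gr H}$ is the identity on $K$ (since $K$ is involutory by Larson-Radford), and on the $(1,g)$-primitive $y \# 1$ the formula $\Sc(x) = -g^{-1} x$ applied twice, combined with $\chi(g) = -1$, yields $\Sc^2(y \# 1) = -(y \# 1)$. Hence $\Sc^2_{\gr H}$ has order exactly $2$. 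The main technical step is transferring this to $H$ itself: since $\Sc_H$ preserves the coradical filtration and $\gr \Sc_H = \Sc_{\gr H}$, we have $\gr(\Sc_H^4) = \id$, so $\Sc_H^4 - \id$ strictly lowers the coradical degree and is therefore nilpotent. Because $\Sc_H$ has finite order by Radford's theorem, $\Sc_H^4$ is simultaneously unipotent and of finite order, which in characteristic zero forces $\Sc_H^4 = \id$. Combined with $\Sc_H^2 \neq \id$ (Larson-Radford, as $H$ is not semisimple), we conclude $\ord(\Sc^2) = 2$. This unipotent-plus-finite-order lifting step from $\gr H$ back to $H$ is the main obstacle to watch.
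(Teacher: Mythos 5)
The decisive step of your argument --- ``by Nichols--Zoeller, $n=\dim K$ divides $\dim H_0$'' --- is not justified, and this is a genuine gap on which everything else rests. Nichols--Zoeller gives freeness over $K$ of $H$ itself, or more generally of relative $(H,K)$-Hopf modules; to apply any version of it to $H_0$ you would first need $H_0$ to be stable under multiplication by $K$, i.e. $KH_0\subseteq H_0$. That is precisely the kind of statement that fails for coradicals in general: $H_0H_0\subseteq H_0$ \emph{is} the Chevalley property you are trying to prove, and left multiplication by an element of $K$ is not a coalgebra map, so it need not send simple subcoalgebras into $H_0$. (Left multiplication by a group-like \emph{is} a coalgebra automorphism, which is why the divisibility of $\dim H_{0,d}$ by $|G(H)|$ quoted in the preliminaries does hold --- but that argument does not extend to an arbitrary cosemisimple Hopf subalgebra $K\subseteq H_0$.) Your parenthetical ``which in turn divides $\dim H$'' has the same defect, since $H$ is not known to be free over $H_0$ before $H_0$ is known to be a subalgebra. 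Without the divisibility you only get $n\le\dim H_0\le 2n-1$. The paper closes this by dualizing: for $\pi:H^*\to K^*$, Proposition \ref{prop:BGHilNg09} gives $(H^*)^{\co\pi}=\k\{1,x\}$ with $x$ a $(1,g)$-primitive; the trivial case is excluded because it would make $H^*$ an extension of $K^*$ by $\k C_2$, hence semisimple; in the nontrivial case $x^2=0$ and $\ker\pi=xH^*$ is a nilpotent two-sided (Hopf) ideal, so $\ker\pi\subseteq J(H^*)$, while $J(H^*)\subseteq\ker\pi$ because $K^*$ is semisimple. Hence $J(H^*)=\ker\pi$ and $H_0\cong(H^*/J(H^*))^*\cong K$; this also yields $4\mid\dim H$, i.e. $n$ even, at no extra cost.

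Granting $K=H_0$, the remainder of your proposal is correct, in places by a different route than the paper. The identification $R\cong R_{-1}$ and the extraction of the datum $(K,g,\chi)$ with $\chi(g)=-1$ agree with the paper (which cites \cite[Lemma 8.1]{andrussch} for the same translation), and deducing $2\mid\ord(g)$ from $\chi(g)=-1$ is a valid alternative source of the evenness of $n$. Your proof that $\ord(\Sc^2)=2$ --- computing $\Sc^2_{\gr H}$ on generators and then lifting $\Sc^4=\id$ from $\gr H$ to $H$ via ``unipotent of finite order implies identity'' --- is sound, but note that it makes part $(a)$ depend on part $(b)$. The paper instead argues directly on $H$: since $\Tr(\Sc^2)=0$ and $\Sc^2|_K=\id$, the induced map $\ol{\Sc}^2$ on the $n$-dimensional space $H/K$ has trace $-n$ and finite order, forcing $\ol{\Sc}^2=-\id$, hence $\Sc^4=\id$; this is shorter and independent of the biproduct structure. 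The step you flagged as the main obstacle (lifting from $\gr H$ to $H$) is in fact fine; the real obstacle is the unproved freeness of $H_0$ over $K$.
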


\pf
$(a)$ Denote by $\pi:H^*\rightarrow K^*$ the adjoint of the inclusion of $K$ in $H$ 
and set $R=(H^*)^{co \pi}$.  
Then by Proposition \ref{prop:BGHilNg09}, 
$R= \Bbbk\left\{1,x\right\}$, with $x$ a $(1,g)$-primitive element, $g\in G(H^{*})$ and $2\ |\ord(g)$.
If $x$ is trivial, then $H^{*}$ fits into an exact sequence of Hopf algebras 
$			0\rightarrow \k C_{2}\rightarrow H^*\rightarrow K^*\rightarrow 0
$,
which implies that $H^*$ is semisimple, a contradiction.  Thus, $x$ must be nontrivial. 
In such a case, we know that $x^2=0$ and $4 \mid \dim H^*$. Thus, $n$ must be even. 

Further, as $R$ is left-normal, it follows that $h_1r\Sc(h_2)\in R$ for all 
$h\in H^*$ and $r\in R$.  Moreover, one has that 
$\varepsilon(h_1x\Sc(h_2))=\varepsilon(x)\varepsilon(h)=0$. Since 
$R^{+}=\Bbbk x$, we conclude $h_1x\Sc(h_2)\in\k x$.	  
Thus, $hx=h_1x\Sc(h_2)h_3=cxh$ where $c$ is a scalar, implying $H^*x\subset xH^*$. But in such a case,
$(R^+H^*)^2 = (xH^*)^2=xH^*xH^*=x^2H^*=0$ so $R^+H^* \subseteq J(H^*)$. As
$K$ is semisimple, it holds that $J(H^*)\subset R^+H^*$, from which follows that   $J(H^*)= R^+H^*$.
Then, $K^*\cong H^*/R^+H^*$ and hence $K\cong (H^*/J(H^*))^*=H_0$.

Since $H$ is nonsemisimple, $0= \Tr(\Sc^2)$. 
Moreover, as $\Sc^2|_K=\id$, $\Sc^2$ induces a linear automorphism $\ol \Sc^2$ on $H/K$ and we have
$$
0=\Tr(\Sc^2) = \Tr(\Sc^2|_K)+ \Tr(\ol \Sc^2) = n+\Tr(\ol \Sc^2)\,. 
$$
Therefore, $\Tr(\ol \Sc^2)=-n$.
Note that $\dim(H/K)=n$ and $\ol \Sc^2$ is of finite order. 
This forces $\ol \Sc^2 = -\id$ and so the eigenvalues of $\Sc^2$ are $\pm 1$. 
Thus, we have $\Sc^4=\id$. Since $H$ is nonsemisimple, $\ord(\Sc^2)=2$. 

$(b)$ Since $H$ is not semisimple,
$\gr H$ is not semisimple.
Moreover, as
 $R=(\gr H)^{\co\pi}$ with $\pi: \gr H \twoheadrightarrow K$ the
 canonical projection, we have that
 $ \dim R=2$.   Then by Proposition \ref{prop:BGHilNg09}, $R$ has basis
$\{1,x\}$ where $x$ is a nontrivial $(1,g)$-primitive and $x^2 = 0$.  
 Then
we must have that $R \simeq R_{-1}$ and $x \in R(1)$. Since $R(1) \in \ydK$, there exists an algebra map  $\chi : K \to \k$ such that
$$ h\cdot x = \chi (h) x \text{ for all }h \in K. $$
Then by \cite[Lemma 8.1]{andrussch} and the following remark
we have that $(K, g, \chi)$ is a YD-Datum for $R_{-1}$ (see Definition \ref{def:YD-datum}).

\epf

\begin{definition} (cf. \cite{CDMM})
Define $\Hc_{8p}$ as the $\k$-algebra generated by the elements
$a,x,z$ satisfying the relations
\begin{align*}
a^{2} & = 1,&
x^{2p} & = 1,&
z^{2} & = {a -1}, \\
ax & = xa, &
az & = za, &
xz & = -zx.
\end{align*}
\end{definition}
\noindent It is a Hopf algebra with the coalgebra structure and antipode given by
\begin{align*}
\com(a)& =a\ot a,
& \com(x) & =  x\ot e_{0}x + ax^{-1}\ot e_{1}x,
& \com(z) & = g\ot z + z\ot 1\\
\Sc(a) & = a, &
\Sc(x) & = e_{0}x^{-1} + e_{1}x, &
\Sc(z) & = -g^{-1}z\\
\eps(a)& =1,&
 \eps(x) & =1,&
\eps(z) & =0,
\end{align*}
where $g = (e_{0} + \sqrt{-1}e_{1})x^{p} $. 
Clearly, $\Hc_{8p}$ contains $\Bbbk^{\Dic_{p}}$ as a Hopf
subalgebra and $g$ is a group-like element of order 4 that  generates $G(\Hc_{8p})$.
Moreover, by \cite[Theorem 4.2]{CDMM} we have that
$\Hc_{8p}$ is a lifting of a two-dimensional Nichols
algebra over $\Bbbk^{\Dic_{p}}$.

\begin{remark}
Note that the definition above is denoted $\Hc_{8p}(1)$ in \cite{CDMM}.  
There, the authors define the Hopf algebra $\Hc_{8p}(\alpha)$ as above except
that $z^2 = \alpha(a-1)$.  Since we are working over an algebraically closed field of characteristic 0, 
$\Hc_{8p}(\alpha)$ is isomorphic to $\Hc_{8p}(1)$ for $\a\ne 0$, and we denote $\Hc_{8p}(1)$ simply by 
$\Hc_{8p}$. Notice that $\Hc_{8p}(0) \simeq R_{-1}\# \Bbbk^{\Dic_{p}}$.
\end{remark}

\begin{prop}\label{prop: lifting of kDic type (4,6)} The Hopf algebra
$\Hc_{8p}$ is a nonsemisimple, nonpointed, nonbasic Hopf algebra
of dimension $8p$ with coradical $\Bbbk^{\Dic_{p}}$. Moreover,
$(\Hc_{8p}^{*})_{0} = \Bbbk C_{2p} \oplus \coM^{p}$.
\end{prop}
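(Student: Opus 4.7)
The plan proceeds in three stages: first establish the dimension and coradical of $\Hc_{8p}$, then compute $G(\Hc_{8p}^*)$ by inspection of the defining relations, and finally classify all simple $\Hc_{8p}$-modules.

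For the first stage, since $\Hc_{8p}$ is by construction the lifting of the two-dimensional Nichols algebra $R_{-1}$ over $\Bbbk^{\Dic_{p}}$ (cf.\ the paragraph before the statement and \cite[Theorem 4.2]{CDMM}), we have $\dim \Hc_{8p}=8p$, the subalgebra generated by $a$ and $x$ is a Hopf subalgebra isomorphic to $\Bbbk^{\Dic_{p}}$ as in Example \ref{ex:dicyclic}, and $\gr \Hc_{8p}\simeq R_{-1}\#\Bbbk^{\Dic_{p}}$. Nonsemisimplicity of $\Hc_{8p}$ then follows from the nonsemisimplicity of $\gr \Hc_{8p}$, since $R_{-1}=\Bbbk[y]/(y^{2})$ has nilpotent augmentation ideal. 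Lemma \ref{lem:RYD-datum}$(a)$, applied to the inclusion $\Bbbk^{\Dic_{p}}\hookrightarrow \Hc_{8p}$, yields $(\Hc_{8p})_{0}=\Bbbk^{\Dic_{p}}$. Consequently $G(\Hc_{8p})=G(\Bbbk^{\Dic_{p}})=\langle g\rangle\simeq C_{4}$ has order $4<4p$, so $\Hc_{8p}$ is nonpointed.

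For the second stage, any algebra character $\chi:\Hc_{8p}\to \Bbbk$ satisfies $2\chi(x)\chi(z)=0$ from $xz=-zx$. Since $\chi(x)^{2p}=1$ forces $\chi(x)\ne 0$, we obtain $\chi(z)=0$; therefore $\chi(a)-1=\chi(z^{2})=0$ and $\chi(a)=1$, while $\chi(x)$ may be any $2p$-th root of unity. Hence $G(\Hc_{8p}^{*})\simeq C_{2p}$, and this supplies $2p$ pairwise nonisomorphic one-dimensional $\Hc_{8p}$-modules.

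For the third stage, I would classify the remaining simple $\Hc_{8p}$-modules by restricting to the commutative subalgebra $\Bbbk^{\Dic_{p}}$. Every simple module $V$ decomposes into joint $(a,x)$-eigenspaces $V=\bigoplus V_{(\alpha,\beta)}$ with $a\cdot v=(-1)^{\alpha}v$ and $x\cdot v=\zeta^{\beta}v$, where $\zeta$ is a primitive $2p$-th root of unity. The relations $az=za$ and $xz=-zx$ yield $z(V_{(\alpha,\beta)})\subseteq V_{(\alpha,\beta+p)}$, and $z^{2}=a-1$ acts as $(-1)^{\alpha}-1$. When $\alpha=0$, $z^{2}=0$ forces $zV$ to lie in $\ker z$; simplicity then forces $z=0$ on $V$, so $V$ becomes a simple module over the commutative algebra $\Bbbk^{\Dic_{p}}$ and is one of the characters from the second stage. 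When $\alpha=1$, $z^{2}=-2$ is invertible, so $z:V_{(1,\beta)}\to V_{(1,\beta+p)}$ is an isomorphism; choosing $0\ne v\in V_{(1,\beta)}$, the span $M_{\beta}=\Bbbk\{v,zv\}$ is an $\Hc_{8p}$-stable subspace on which $a=-I$, $x=\diag(\zeta^{\beta},-\zeta^{\beta})$, and $z=\left(\begin{smallmatrix}0&-2\\1&0\end{smallmatrix}\right)$. Simplicity of $M_{\beta}$ is immediate because $z$ swaps the two distinct $x$-eigenlines. Swapping the basis identifies $M_{\beta}\simeq M_{\beta+p}$, while the unordered pair $\{\zeta^{\beta},-\zeta^{\beta}\}$ is a complete invariant, giving exactly $p$ isomorphism classes of two-dimensional simples. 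In particular $\Hc_{8p}$ is not basic, and
\[
\dim(\Hc_{8p}^{*})_{0}\;=\;\sum_{V\text{ simple}}(\dim V)^{2}\;=\;2p+p\cdot 4\;=\;6p,
\]
which matches the claimed decomposition $\Bbbk C_{2p}\oplus \coM^{\,p}$ into simple subcoalgebras. The main obstacle is the third stage, the explicit case analysis of the simple modules and the verification that the listed modules exhaust them; the rest follows routinely from the structural results in Section~\ref{sec:preliminaries}.
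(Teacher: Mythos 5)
Your proposal is correct, and the first two stages (dimension, coradical, nonpointedness via \cite[Theorem 4.2]{CDMM}, and the computation of the $2p$ algebra characters, which coincide with the modules $W_j$ in the paper) match the paper's proof; the two-dimensional modules $M_\beta$ you construct are literally the paper's $U_i$ with $u_1^i=v$, $u_2^i=zv$. Where you genuinely diverge is in how the equality $(\Hc_{8p}^*)_0=\Bbbk C_{2p}\oplus\coM^{p}$ is clinched. The paper only shows there are \emph{at least} $p$ isomorphism classes of two-dimensional simples, deduces $\dim(\Hc_{8p}^*)_0\geq 6p$, and then closes the gap from above: by \cite[Proposition 3.2]{BG} the dual has a nontrivial skew-primitive, the $2p$ group-likes translate it into a $2p$-dimensional space of nontrivial skew-primitives outside the coradical, and $8p\geq 6p+2p$ forces equality. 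You instead prove exhaustiveness of the list of simples directly, using that $a$ is central of order $2$ (so it acts by $\pm 1$ on any simple module), that $a=1$ forces $z^2=0$ and hence $z=0$ by the $\ker z$ argument (reducing to characters of the commutative quotient), and that $a=-1$ makes $z^2=-2$ invertible so that $\Bbbk\{v,zv\}$ is already a simple submodule. Your route is more self-contained --- it avoids the external skew-primitive existence result and yields the complete representation theory of $\Hc_{8p}$, not just a lower bound --- at the cost of the eigenspace case analysis; the paper's route is shorter on the module-theoretic side but leans on \cite{BG}. The only points worth tightening in your write-up are minor: the deduction of nonsemisimplicity is cleanest as ``$\gr\Hc_{8p}\neq(\Hc_{8p})_0$, so $\Hc_{8p}$ is not cosemisimple, hence not semisimple by Larson--Radford'' (rather than transporting nonsemisimplicity of the algebra $\gr\Hc_{8p}$ to $\Hc_{8p}$), and the identification $G(\Hc_{8p}^*)\simeq C_{2p}$ needs the one-line convolution computation $(\chi_1\chi_2)(x)=\chi_1(x)\chi_2(x)$ using $\chi_i(e_0)=1$, $\chi_i(e_1)=0$. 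Neither is a gap.
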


\pf By \cite[Theorem 4.2]{CDMM}, $\Hc_{8p}$ is a non-trivial lifting of a quantum
line $R_{-1}$ over the semisimple Hopf algebra $\Bbbk^{\Dic_{p}}$.
Hence it is nonsemisimple and nonpointed. Since the diagram $R_{-1}$
is given by the YD-Datum $(\Bbbk^{\Dic_{p}}, g,\chi)$, with
$\chi: \Bbbk^{\Dic_{p}} \to \Bbbk $
an algebra map defined by $\chi(g) = -1$, $\chi(a)=1$ and $\chi(x) = -1$, we have that
$\dim R_{-1} = 2$ and consequently, $\dim \Hc_{8p}= 8p$.

To finish the proof, we find all simple representations of $\Hc_{8p}$.
Since not all simple representations are one-dimensional, the
algebra is nonbasic.

Let $\xi$ be a primitive $2p$th root of unity. Since $\Bbbk^{\Dic_{p}}$ is a
commutative algebra, all simple representations are one-dimensional. They are
given by the family of $\Bbbk$-vector spaces
$(V_{i,j} = \Bbbk v_{i,j})_{i\in \Z_{2}, j\in \Z_{2p}}$
with the action determined by $a\cdot v_{i,j} = (-1)^{i}v_{i,j}$
and $x\cdot v_{i,j} = \xi^{j}v_{i,j}$.

The one-dimensional representations of $\Hc_{8p}$ are given
by the vector spaces $W_{j} = \Bbbk w_{j}$ with $j\in \Z_{2p}$. The action
is determined by $a\cdot w_{j} = w_{j}$, $x\cdot w_{j} = \xi^{j}w_{j}$
and $z\cdot w_{j} = 0$. Clearly, $W_{j}\simeq W_{\ell}$ if and only if
$\ell=j $ in $\Z_{2p}$.

The two-dimensional representations of $\Hc_{8p}$ are given
by the vector spaces $U_{i} = \Bbbk u_{1}^{i} + \Bbbk u_{2}^{i}$
with $i\in \Z_{2p}$. The action
is determined by
$$
a\cdot u_{j}^{i}  = - u_{j}^{i} \qquad  
x\cdot u_{j}^{i}  = (-1)^{j-1}\xi^{i}u_{j}^{i},\qquad
z\cdot u_{1}^{i}  = u_{2}^{i},\quad \text{ and }\quad
z\cdot u_{2}^{i} = -2 u_{1}^{i},
$$
for $j=1,2$. We show now that $U_{i}$ is simple for all $i\in \Z_{2p}$ and
$U_{i}\simeq U_{j}$ if and only if
$i= j $ or $i=j+p$ in $\Z_{2p}$.
Clearly, $U_{i} = V_{1,i} \oplus V_{1,i+p}$ as $\Bbbk^{\Dic_{p}}$-modules
with $V_{1,i} = \Bbbk u_{1}^{i}$ and $V_{1,i+p} = \Bbbk u_{2}^{i}$.

 Let
$i \in \Z_{2p}$ and assume that $U_{i}$ is not simple. Since it is two-dimensional,
it must contain a one-dimensional simple module, say spanned by
$v= \beta u_{1}^{i} + \gamma u_{2}^{i}$. Thus,
$a\cdot v = -v$
and $x\cdot v = \mu v$ for some $2p$th root of unity $\mu$.
Since $x \cdot v=\beta \xi^{i}u_{1}^{i} - \gamma \xi^{i}u_{2}^{i}$,
it follows that $\mu = \xi^{i}$ and $v = \beta u_{1}^{i}$ or
$\mu = -\xi^{i} = \xi^{i+p}$ and $v = \gamma u_{2}^{i}$.
But this is impossible, because
$z$ would not stabilize $v$.

Let $\varphi: U_{i} \to U_{j}$ be a morphism of $\Hc_{8p}$-modules.
Since both modules are simple, we have that $\varphi = 0$ or $\varphi$ is an isomorphism.
Assume that $\varphi \neq 0$. Then $\varphi(u_{1}^{i})$ must be an eigenvector
of $x$ in $U_{j}$ of eigenvalue $\xi^{i}$, since the eigenvalues of $x$ in
$U_{j}$ are $\pm\xi^{j}$, the claim follows. Consequently, we have at least
$p$ isomorphism
classes of two-dimensional simple modules.

Then $\dim (\Hc_{8p}^{*})_{0} \geq 6p$ with group-likes of order at least $2p$.
  By \cite[Proposition 3.2]{BG}, $\Hc_{8p}^{*}$ has a nontrivial
skew-primitive element and so  the dimension of the linear space spanned by the nontrivial
skew-primitive elements is at least $2p$. 
Thus, $(\Hc_{8p}^{*})_{0} = \Bbbk C_{2p} \oplus \coM^{p}$
as claimed.
\epf

\begin{thm}\label{thm:chevnonpointed8p}
Let $H$ be a nonsemisimple, nonpointed, nonbasic Hopf algebra
of dimension $8p$ with the Chevalley property. Then $H$ is isomorphic either to $R_{-1}\# A_{4p}$ or
$\Hc_{8p}$.
\end{thm}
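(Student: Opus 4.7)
The plan is to apply the structural results already established in this section. By Proposition \ref{prop:chev-prop-cor-4p} the coradical has $\dim H_0=4p$, and Lemma \ref{lem:RYD-datum} then gives $\gr H\simeq R_{-1}\# H_0$ arising from some YD-datum $(H_0,g,\chi)$ in the sense of Definition \ref{def:YD-datum}. Hence $H$ is a lifting over some semisimple Hopf algebra $H_0$ of dimension $4p$. Since $H$ is nonpointed, $H_0$ is not a group algebra, so by Remark \ref{rmk:semisimple4p} the only candidates for $H_0$ are $\Bbbk^{\Dih_{2p}}$, $\Bbbk^{\Dic_p}$, $A_{4p}$, $B_{4p}$, and -- when $p\equiv 1\pmod 4$ -- also $\Bbbk^{\Ga_{4p}}$.

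Next I would invoke the classification of YD-data for $R_{-1}$ and the corresponding liftings worked out in \cite{CDMM}, particularly Theorems 4.2 and 4.3. Together with Examples \ref{ex:dicyclic}(a)--(b) the outcome is that, among the candidates above, a YD-datum for $R_{-1}$ admitting a nonbasic lifting exists only for $H_0=\Bbbk^{\Dic_p}$ and $H_0=A_{4p}$. For each of $\Bbbk^{\Dih_{2p}}$, $B_{4p}$ and (when applicable) $\Bbbk^{\Ga_{4p}}$, either no central order-$2$ element $g\in G(H_0)$ supports an algebra map $\chi:H_0\to\k$ satisfying $\chi(g)=-1$ together with the compatibility identity $(\chi\wact h)g=g(h\leftharpoonup\chi)$, or every resulting lifting is basic and so cannot equal our $H$.

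The two surviving cases are then finished directly. For $H_0=\Bbbk^{\Dic_p}$, by \cite[Theorem 4.2]{CDMM} the liftings form the one-parameter family $\Hc_{8p}(\alpha)$, $\alpha\in\k$; over the algebraically closed field $\k$ of characteristic zero $\Hc_{8p}(\alpha)\simeq \Hc_{8p}$ for $\alpha\neq 0$, whereas $\Hc_{8p}(0)\simeq R_{-1}\#\Bbbk^{\Dic_p}$ is basic because $\Bbbk^{\Dic_p}$ is commutative. Since $H$ is nonbasic and Proposition \ref{prop: lifting of kDic type (4,6)} shows $\Hc_{8p}$ is nonbasic, we conclude $H\simeq \Hc_{8p}$. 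For $H_0=A_{4p}$, Example \ref{ex:dicyclic}(b) and \cite[Theorem 4.3]{CDMM} leave only the trivial lifting $R_{-1}\# A_{4p}$; since $R_{-1}^+\# A_{4p}$ is a nilpotent Hopf ideal of $R_{-1}\# A_{4p}$ with quotient $A_{4p}$, the simples of $R_{-1}\# A_{4p}$ are exactly those of $A_{4p}$, and from $A_{4p}\simeq \Bbbk C_4\oplus\coM^{p-1}$ as coalgebras we know $A_{4p}$ has $p-1$ simple modules of dimension $2$, so $R_{-1}\# A_{4p}$ is indeed nonbasic.

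The main obstacle is the case analysis ruling out $\Bbbk^{\Dih_{2p}}$, $B_{4p}$ and $\Bbbk^{\Ga_{4p}}$. For each of these one has to locate the central order-$2$ elements of $G(H_0)$, enumerate the algebra maps $\chi$ satisfying the compatibility identity of Definition \ref{def:YD-datum}, and, for each surviving YD-datum, verify that every resulting lifting is basic. This is essentially the content of the relevant portion of \cite{CDMM}, but one has to confirm that no nonbasic lifting slips through in the excluded cases.
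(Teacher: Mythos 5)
Your overall strategy coincides with the paper's: reduce to $\dim H_0=4p$ via Proposition \ref{prop:chev-prop-cor-4p}, pass to $\gr H=R_{-1}\# H_0$ via Lemma \ref{lem:RYD-datum}, enumerate the semisimple candidates for $H_0$ from Remark \ref{rmk:semisimple4p}, and settle each case using \cite{CDMM}. Your explicit verification that $R_{-1}\#\Bbbk^{\Dic_p}$ is basic (so the nonbasic hypothesis forces the nontrivial lifting $\Hc_{8p}$) and that $R_{-1}\# A_{4p}$ is nonbasic is a welcome elaboration of points the paper leaves implicit.

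However, there is one genuine gap: the case $H_0=\Bbbk^{\Ga_{4p}}$ (which occurs only for $p\equiv 1\bmod 4$) is \emph{not} covered by \cite{CDMM} -- Theorems 4.1--4.4 there treat only $\Bbbk^{\Dih_{2p}}$, $\Bbbk^{\Dic_p}$, $A_{4p}$ and $B_{4p}$ -- so you cannot dispose of it by the same citation, and your fallback disjunction (``no admissible YD-datum, or every lifting is basic'') is asserted rather than proved. You correctly flag this as the main obstacle, but it is precisely the step requiring a new idea. The paper's argument is different in kind: rather than hunting for YD-data over $\Bbbk^{\Ga_{4p}}$, it dualizes, writing $(\gr H)^*\simeq R^*\#\Bbbk\Ga_{4p}$ with $R^*$ a two-dimensional braided Hopf algebra in $\ydgap$; semisimplicity of that category together with Table \ref{tabladnpar:4p} (simple Yetter--Drinfeld modules have dimension $1$, $4$ or $p$) forces $R^*$ to be a sum of two one-dimensional modules supported on the trivial conjugacy class, whose Nichols algebras are infinite-dimensional by Lemma \ref{lem:nichols-infty} -- a contradiction. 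Without this (or an equivalent direct computation of algebra maps $\chi:\Bbbk^{\Ga_{4p}}\to\k$ and central group-likes satisfying Definition \ref{def:YD-datum}), your case analysis is incomplete for $p\equiv 1\bmod 4$.
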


\pf By
Proposition \ref{prop:chev-prop-cor-4p},
we know that $\dim H_{0} = 4p$. If we denote $A = \gr H$, then
we have that $A= R\#H_{0}$ and by Lemma \ref{lem:RYD-datum},
$R=R_{-1}$ admits a YD-datum given by
$(H_{0},g,\chi)$ for some $g\in G(H_{0})$ and $\chi: H_{0} \to \Bbbk$.
If $p \equiv 3 \mod 4$, we have four possibilities (up to isomorphism) for
$H_{0}$: $A_{4p}$, $B_{4p} $, $\Bbbk^{\Dih_{2p}}$ and $\Bbbk^{\Dic_{p}}$.
If $p \equiv 1 \mod 4$, there is an additional possibility given by $\Bbbk^{\Ga_{4p}}$.

First we show that the cases $B_{4p}$, $\Bbbk^{\Dih_{2p}},\Bbbk^{\Ga_{4p}}$ are
impossible.
By  \cite[Theorem 4.4]{CDMM}, there is no YD-datum for $R$ when
$H_{0} = B_{4p}$ and so the first case is impossible.
For $H_{0} = \Bbbk^{\Dih_{2p}}$, by \cite[Theorem 4.1]{CDMM}
there is one YD-datum but it admits no lifting. This implies
that $H = A = R_{-1}\#  \Bbbk^{\Dih_{2p}}$ from which follows
that $H^{*} \simeq R_{-1}^{*}\# \Bbbk \Dih_{2p}$ is pointed, a contradiction.
Finally, let $H_{0}= \Bbbk^{\Ga_{4p}}$ and assume that there exists a braided
Hopf algebra $R$ of dimension $2$ such that $A = R\# \Bbbk^{\Ga_{4p}}$. Then
$A^{*} \simeq R^{*}\# \Bbbk {\Ga_{4p}}$, where $R^{*}$ is a braided Hopf
algebra in $\ydgap$. Since this category is semisimple, $R^{*}$ is a sum of simple
modules. But since the simple modules in $\ydgap$ have dimensions $1, 4$ or $p$
by Table \ref{tabladnpar:4p}, $R^{*}$ must be the sum of two simple one-dimensional modules of
the form $M(\oc, \rho)$ with $\oc = \{1\}$.
This is impossible, because $R^{*}$ would contain the Nichols algebra
generated by these modules and the later are infinite-dimensional by
Lemma \ref{lem:nichols-infty}.

If $H_{0}= A_{4p}$, then by \cite[Theorem 4.3]{CDMM} $R$ admits a unique (up to isomorphism)
 YD-datum and it has no lifting. Thus $H = A = R_{-1}\# A_{4p}$;
in this case, $H$ is a self-dual Hopf algebra.
If $H_{0} = \Bbbk^{\Dic_{p}}$, then by \cite[Theorem 4.2]{CDMM},
$A =R_{-1} \# \Bbbk^{\Dic_{p}}$
has a nontrivial lifting isomorphic to $\Hc_{8p}$.
\epf

We end this section with the classification of Hopf algebras of dimension $8p$ with the Chevalley property.
The determination of the pointed Hopf algebras in this case follows essentially from
the results of Nichols, Gra\~na (\cite{nichols}, \cite{Gr,Gr2}), and several authors that contributed 
to the solution of the abelian case, among them Andruskiewitsch, Schneider,
Heckenberger and Angiono, see for example  \cite{AA}.

\begin{thm}\label{thm:pointed8p}
Let $H$ be a nonsemisimple Hopf algebra of dimension $8p$ with the Chevalley property.
\begin{enumerate}
 \item[$(a)$] Assume $H$ is pointed. 
 Then $H$ is isomorphic to one of the Hopf algebras constructed
 in \cite{Gr}.
 \item[$(b)$] Assume neither $H$ nor $H^{*}$ is pointed. Then  $H$ is isomorphic either to $R_{-1}\# A_{4p}$ or
$\Hc_{8p}$.
\end{enumerate}
\end{thm}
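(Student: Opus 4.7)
The proof separates into two independent pieces. For (b), observe that in finite dimensions $H$ is basic if and only if $H^{*}$ is pointed, so the hypothesis ``neither $H$ nor $H^{*}$ is pointed'' coincides with the hypothesis of Theorem \ref{thm:chevnonpointed8p}. That theorem then yields the classification $H \cong R_{-1}\# A_{4p}$ or $H \cong \Hc_{8p}$.

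For (a), the plan is to apply the Andruskiewitsch--Schneider lifting method. Since $H$ is pointed with the Chevalley property, $H_{0} = \Bbbk G$ with $G = G(H)$. By Nichols--Zoeller, $|G|$ divides $8p$, and since $H$ is nonsemisimple, $|G| < 8p$. Then $\gr H \cong R \# \Bbbk G$, where $R$ is a connected graded braided Hopf algebra in $\ydg$, and its degree-one component $V = R(1)$ generates a Nichols subalgebra $\toba(V) \subseteq R$ of dimension dividing $\dim R = 8p/|G|$. The classification then proceeds in three steps: first, enumerate the groups $G$ of order dividing $8p$ that can arise; second, for each such $G$, determine all $V \in \ydg$ with $\dim \toba(V)\cdot |G|=8p$; third, for each surviving pair $(G,V)$, classify all liftings to $8p$-dimensional Hopf algebras.

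For abelian $G$, the second step is settled by Heckenberger's classification of Nichols algebras of diagonal type. For nonabelian $G$, the candidates are $\Dih_{4}$, $Q_{8}$, $\Dih_{2p}$, $\Dic_{p}$, and, when $p \equiv 1 \pmod{4}$, $\Ga_{4p}$; here one invokes the classification of finite-dimensional Nichols algebras over conjugacy classes. Lemma \ref{lem:nichols-infty} and the remark following it cut down the possibilities over $\Ga_{4p}$ drastically, leaving only certain classes $\oc_{x^{m}}$ with $mk\not\equiv 0\pmod{4}$. The residual enumeration and lifting analysis was carried out specifically in dimension $8p$ by Gra\~na in \cite{Gr}, so part (a) ultimately reduces to a citation of that work. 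The principal obstacle lies precisely in the nonabelian case analysis (in particular the $\Ga_{4p}$ case), which constitutes the bulk of \cite{Gr}.
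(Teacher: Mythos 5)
Part (b) of your proposal is exactly the paper's argument: identifying ``$H^{*}$ not pointed'' with ``$H$ not basic'' and invoking Theorem \ref{thm:chevnonpointed8p} is all that is needed there.

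For part (a) your overall strategy (lifting method, then reduction to \cite{Gr}) is the one the paper follows, but the reduction as you describe it has a gap. The paper first eliminates the cases $|G(H)|\in\{2,4,8\}$ outright: in each of them a finite-dimensional Nichols algebra over $G(H)$ forces $\dim H$ to be a power of $2$ (by \cite[Theorem 4.2.1]{nichols} for order $2$, by \cite[Corollary 4.1]{Gr} for order $8$, and by the abelian classification, see \cite{AA}, for order $4$), contradicting $\dim H=8p$. Only after that does one know $|G(H)|\in\{2p,4p\}$, so that the diagram has dimension $8p/|G(H)|\le 4<32$ and Gra\~na's classification of Nichols algebras of dimension $<32$ applies. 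Your write-up skips this elimination and instead asserts that the ``residual enumeration and lifting analysis was carried out specifically in dimension $8p$'' in \cite{Gr}; that is not what \cite{Gr} contains --- it classifies low-dimensional Nichols algebras over arbitrary groups, not $8p$-dimensional pointed Hopf algebras --- and for $|G(H)|\le 8$ the diagram has dimension $p$, $2p$ or $4p$, which in general exceeds $31$, so the citation alone does not close those cases.

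A smaller but genuine misstep: your use of Lemma \ref{lem:nichols-infty} and of the classes $\oc_{x^{m}}$ in part (a) is misplaced. If $G(H)\cong\Ga_{4p}$ then the diagram is $2$-dimensional, so $V=R(1)$ is a one-dimensional Yetter--Drinfeld module; since $\Ga_{4p}$ has trivial center this forces the trivial conjugacy class, which Lemma \ref{lem:nichols-infty} excludes. The modules $M(\oc_{x^{m}},\chi_{k})$ have dimension $p\ge 3$ and cannot occur at all here, so they are not ``surviving candidates''. In the paper, Lemma \ref{lem:nichols-infty} is used in the proof of Theorem \ref{thm:chevnonpointed8p}, i.e.\ on the nonpointed side, to rule out the coradical $\k^{\Ga_{4p}}$, not in part (a).
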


\pf $(a)$ Assume $H$ is pointed. Then $|G(H)| \in \{2,4,8,2p,4p\}$. The cases
$|G(H)| = 2$ and $8$ are impossible by \cite[Theorem 4.2.1]{nichols} and \cite[Corollary 4.1]{Gr},
respectively,
because $\dim H$ would be a power of $2$. The case $|G(H)|=4$ is also impossible 
for the same reason, although the result is due to several authors working on
pointed Hopf algebras over abelian groups. For details see for example \cite{AA}.
Finally, if $|G(H)| = 2p$ or $4p$, then 
$H$ is isomorphic to one of the Hopf algebras constructed
 in \cite{Gr}, since $\frac{\dim H}{|G(H)|} < 32$.   

%If $H^{*}$ is pointed, then $H$ is isomorphic to a dual Hopf algebra listed in $(a)$.

$(b)$ Assume $H$ is neither pointed nor basic. Then $H$ is isomorphic either to $R_{-1}\# A_{4p}$ or
$\Hc_{8p}$ by Theorem \ref{thm:chevnonpointed8p}.
 \epf

 \section{Nonsemisimple Hopf algebras of dimension 24}
 We end the paper with some structural results on nonsemisimple Hopf algebras of dimension 24. 
 Throughout this section $H$ will be a nonsemisimple Hopf algebra of dimension 24.
 We begin with the following lemma.
 
\begin{lem} \label{l:1} If $G(H)$ is nontrivial,  then $H$ has a nontrivial skew-primitive element.
\end{lem}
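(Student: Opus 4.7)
The plan is to argue by contradiction: suppose every skew-primitive of $H$ is trivial, i.e. every $(g,h)$-skew-primitive lies in $\k(g-h)$ for some $g,h\in G(H)$, and derive that $H$ must be semisimple, contradicting the standing hypothesis. By the Larson--Radford theorem, in characteristic zero $H$ is semisimple if and only if $H=H_0$, so it is enough to prove $H_1 = H_0$: once this holds, the standard induction via $\Delta(H_n)\subseteq H_0\otimes H_n + H_{n-1}\otimes H$ yields $H_n=H_0$ for all $n$ and hence $H=H_0$.

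If $H$ is pointed, the argument is immediate from the Taft--Wilson theorem, which gives a decomposition
\[
H_1 \ = \ \k G(H)\ \oplus\ \bigoplus_{g,h\in G(H)} P'_{g,h}(H),
\]
where $P'_{g,h}(H)$ is a vector space complement of $\k(g-h)$ inside the space of $(g,h)$-skew-primitives. The triviality assumption forces every $P'_{g,h}(H)=0$, so $H_1=H_0$ as required. Thus the real content of the lemma is the non-pointed case, where $H_0$ strictly contains $\k G(H)$ and has simple matrix subcoalgebras of dimension $>1$.

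For the non-pointed case I would exploit the group-like $g\in G(H)\setminus\{1\}$ together with the $H_0$-bicomodule structure on $H_1/H_0$. The subcoalgebra $\k\{1,g\}$ carves out a grouplike-to-grouplike piece of $H_1/H_0$ which, by the triviality assumption, vanishes. Any remaining contribution to $H_1/H_0$ must then come from extensions between simple subcoalgebras of dimension $>1$ in $H_0$. Since $\dim H=24$, Nichols--Zoeller forces $\dim H_0\mid 24$ and $|G(H)|\mid 24$, so the possibilities for $H_0$ are extremely limited and can be enumerated: $H_0$ is a semisimple Hopf algebra of dimension $2,3,4,6,8,12$ or $24$ containing $\k G(H)$. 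For each such $H_0$ one would use the multiplication by $g$ on $H_1/H_0$ to pair matrix-coalgebra extensions with grouplike extensions and conclude that the former cannot exist in isolation, forcing $H_1=H_0$.

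The main obstacle is this last bicomodule bookkeeping in the non-pointed case: showing that the $\k G(H)$-action on $H_1/H_0$ rules out a purely matrix-to-matrix part. A slicker alternative, which I would try first, is to appeal to the Hopf-subalgebra generated by $\k G(H)$ together with the simple subcoalgebras of $H_0$ and apply a dichotomy along the lines of \cite[Proposition~3.2]{BG}; if applicable, this directly produces a nontrivial skew-primitive from the data of a nontrivial grouplike together with $H_1\supsetneq H_0$, bypassing the case analysis entirely. If the Chevalley property happens to hold, then $\gr H\simeq R\# H_0$ and the nonsemisimplicity forces $R(1)\neq 0$, whose nonzero elements lift to nontrivial skew-primitives of $H$; this is the cleanest subcase of the argument.
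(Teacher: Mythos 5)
There is a genuine gap, and it sits exactly where you yourself locate ``the main obstacle.'' Your reduction to showing $H_1=H_0$ is sound in principle (then $H_n=H_0\wedge H_{n-1}=H_0$ for all $n$, and Larson--Radford gives semisimplicity), and the pointed case via Taft--Wilson is fine; but that case is vacuous here since $H=\k G(H)$ would already be semisimple. In the non-pointed case, triviality of the skew-primitives only kills the group-like-to-group-like component of $H_1/H_0$; the matrix-to-matrix component is not governed by skew-primitives at all, and neither of your fallback suggestions closes it. The appeal to \cite[Proposition 3.2]{BG} requires quantitative hypotheses on $|G(H)|$ or $\dim H_0$ relative to $\dim H$ that are not available here (one must handle, e.g., $|G(H^*)|=2$). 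And the Chevalley subcase is incorrect as stated: $R(1)\neq 0$ does not produce skew-primitives, because $R(1)$ is a Yetter--Drinfeld module over $H_0$ whose coaction may be supported entirely on matrix subcoalgebras of $H_0$, in which case its elements lift to elements of $H_1\setminus H_0$ that are not skew-primitive. The existence of exactly such examples (over $\k^{\mathbb{S}_3}$) is why the lemma is delicate.

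The paper's proof is entirely different and shows how much work the non-pointed case actually takes. It dualizes (proving the statement for $H^*$), so that triviality of skew-primitives in $H^*$ becomes $\Ext(\k,\k_\b)=\Ext(\k_\b,\k)=0$ for all $\b\in G(H^*)$; hence $P(\k)/J^2P(\k)$ contains a simple $V$ with $\dim V\geq 2$. A chain of dimension counts with projective covers then pins down $|G(H^*)|=2$, $\dim V=2$, $V$ the unique simple of dimension $>1$, $\dim P(\k)=4$, $\dim P(V)=8$, and finally $V\o V\cong \k\oplus\k_\a\oplus V$. This exhibits a fusion subcategory of $H$-mod monoidally equivalent to $\Rep(\mathbb{S}_3)$, so $(H^*)_0\cong \k^{\mathbb{S}_3}$, contradicting the Andruskiewitsch--Vay classification \cite{AV1}, by which the smallest nonsemisimple Hopf algebra with the Chevalley property and coradical $\k^{\mathbb{S}_3}$ has dimension $72$. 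None of these ingredients (the $\Ext$-vanishing translation, the projective-cover counts, the fusion subcategory, the input from \cite{AV1}) appears in your proposal, so the non-pointed case remains open in your argument.
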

\begin{proof} It suffices to prove the same statement for $H^*$. 
Note that $H^*$ having the Chevalley property is equivalent to the 
$\k$-linear full subcategory generated by the simple $H$-modules being a fusion subcategory of $H$-mod.

 Suppose $G(H^*)$ is nontrivial and $H^*$ has only trivial skew-primitive elements. 
 Then $H$ has a nontrivial 1-dimension $H$-module, 
 and $\Ext(\k, \k_\b)=\Ext(\k_\b, \k)=0$ for all $\b \in G(H^*)$. 
 Thus, $P(\k)/J^2 P(\k)$ has a simple $H$-submodule $V$ such that $\dim V \ge 2$.

  (i) We first observe that $\dim V =2$ otherwise $\dim V \ge 3$ and so
  $$
  \dim H \ge |G(H^*)| \dim P(\k) + \dim V \dim P(V) \ge 2 \cdot 5 + 3 \cdot 7 > 24.
  $$

  (ii) We claim that if $W$ is another simple $H$-module with $\dim W >1$, then $W$ is 
  projective with $\dim W =2$. Since
  $$
  \dim H \ge |G(H^*)| \dim P(\k) + \dim V \dim P(V) + \dim W \dim P(W)  \ge 2 \cdot 4 + 2 \cdot 5 + \dim W \dim P(W)\,.
  $$
  This forces $\dim W \dim P(W) \le 6$, and hence $\dim W = \dim P(W) = 2$. 
  In particular, $W$ is projective.

  (iii) Since $V$ is the unique simple $H$-module with $\dim V> 1$ that is not projective, 
  $V^* \cong V$ and $\k_\b \o V \cong V \cong V \o \k_\b$ 
  for all $\b \in G(H^*)$. Since $P(\k_\b) \cong \k_\b \o P(\k)$, $V$ is a simple $H$-submodule 
  of $P(\k_\b)/J^2 P(\k_\b)$. 
  This implies that $\sum_{\b \in G(H^*)} \k_\b$ is an $H$-submodule of $P(V)/J^2 P(V)$, and hence
  $$
  \dim P(V) \ge |G(H^*)|+2\dim V = |G(H^*)|+4 \ge 6\,.
  $$

  (iv) $|G(H^*)|=2$ for otherwise $\dim P(V) \geq 7$ and so
  $$
  \dim H \ge |G(H^*)|\dim P(\k) + 2 \dim P(V) \ge 3 \cdot 4 + 2 \cdot 7 > 24.
  $$

  (v) We now claim that $V$ is the unique simple $H$-module with $\dim V > 1$. Suppose not. By (ii), there is a 
  2-dimensional simple projective $H$-module $W$. The inequality
  $$
  \dim H \ge 2 \dim P(\k) + 2 \dim P(V) + (\dim W)^2 \ge 2 \cdot 4 + 2 \cdot 6 + 2 \cdot 2 =24
  $$
  implies that $W$ is the unique simple projective $H$-module,
  $$
  \dim P(\k) =4 \quad\text{and} \quad \dim P(V)=6.
  $$
  Therefore, $W \cong W^*$ and $\k_\a \o W \cong W \cong  W \o \k_\a$, 
  where $\a \in G(H^*)$ is the nontrivial group-like element. 
  Since $W$ is projective, so is $W \o W^*$. Therefore, $W \o W^*$ is a direct 
  sum of indecomposable projective $H$-modules.
  However, $W \o W^*$ maps surjectively onto $\k$ and $\k_\a$ which implies 
  $P(\k) \oplus P(\k_\a)$ is a summand of $W \o W^*$. 
  This is absurd as $\dim (W \o W^*) =\dim P(\k) = \dim P(\k_\a) =4$.

  (vi)  Let $m$ be the multiplicity of $V$ as a composition factor of $P(\k)$. 
  Then, by (iii), $m$ is also the multiplicity of $V$ 
  as a composition factor of $P(\k_\a)$. Since $\k$ is algebraically closed of 
  characteristic zero, $\dim P(V)$ is the multiplicity of 
  $V$ in the composition factors of $H$ (cf. \cite[Theorem 54.19]{CR}).
  By (v), we have
  $$
  H = P(\k) \oplus P(\k_\a) \oplus 2 P(V).
  $$
 As $V$ is composition factor of $P(\k)$, we have that $m > 0$.
 Since $\dim \Soc(P(V)) = \dim V =2$, $\Soc(P(V))=V$ 
  and hence $[P(V):V] \ge 2$. 
  Therefore, $\dim P(V) \ge 2 m + 4$. Thus, the inequality
  $$
  \dim H = 2 \dim P(\k)+ 2 \dim P(V) \ge  2(2+2m)+2(2m+4)= 4(2m+3)
  $$
  forces $m=1$. In particular, $\dim P(\k)=4$. Now we have
  $$
  \dim H = 2 \cdot 4 + 2 \dim P(V)\,,
  $$
  and so $\dim P(V) = 8$.

  (vii) Since $P(\k_\b) \o V$ is projective for any $\b \in G(H^*)$ and it maps 
  surjectively onto $V$, $P(V)$ is a summand of $P(\k_\b) \o V$. 
  This implies $P(\k_\b) \o V \cong P(V)$ by comparing dimensions. 
  Similarly, $V \o P(\k_\b) \cong P(V)$.

  (viii) We now prove that $H^*$ having the Chevalley property means that the 
  $\k$-linear abelian full subcategory generated by $\k, \k_\a$ 
  and $V$ is closed under the tensor product of $H$-mod. Since
  $$
  \dim \Hom_H(P(\k_\b), V \o V) = \dim \Hom_H(V^* \o P(\k_\b), V) =\dim \Hom_H(P(V), V) = 1,
  $$
  the multiplicity of the $\k_\b$ as a composition factor of $V \o V$ is 1. 
  The dual basis map $\db : \k \to V \o V^*$ and the 
  evaluation map $\ev: V^* \o V \to \k$ are nonzero $H$-module maps, so are 
  $\k_\b \o \db$ and  $\k_\b \o \ev$ for all $\b \in G(H^*)$. 
  Therefore, both $\k$ and $\k_\a$ are summands of $V\o V$, and hence 
  $V \o V  \cong V \oplus \k \oplus \k_\a$. By (iii), the $\k$-linear 
  abelian full subcategory $\DD$ generated by $\k, \k_\a$ and $V$ is a fusion subcategory 
  of $H$-mod.

  (ix) By the reconstruction theorem, $\DD$ is monoidally equivalent to $\ol H$-mod for 
  some semisimple quotient Hopf algebra  
  $\ol H$ of $H$. Since $\dim \ol H = 1+1+2^2 =6$, it follows from the classification of 
  6-dimensional Hopf algebras that $\ol H \cong \k \mathbb{S}_3$. 
  Thus, $\k^{\mathbb{S}_3} \cong (H^*)_0$ but this contradicts that the smallest Hopf algebras with 
  the Chevalley property and the coradical isomorphic 
  to  $\k^{\mathbb{S}_3}$ is of dimension 72 \cite{AV1}. This completes the proof of this lemma.
\end{proof}

\begin{cor}
  If $H$ and $H^*$ do not have the Chevalley property, then both $H$ and $H^*$ have a nontrivial 
  skew-primitive element.
\end{cor}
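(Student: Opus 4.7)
The plan is to deduce this corollary as a (nearly) direct consequence of Lemma \ref{l:1} applied to both $H$ and $H^*$. Since $H^*$ also has dimension $24$ and is nonsemisimple, the lemma yields the two implications $G(H)\ne 1 \Rightarrow H$ has a nontrivial skew-primitive and $G(H^*)\ne 1 \Rightarrow H^*$ has a nontrivial skew-primitive. (The proof of Lemma \ref{l:1} is stated for $H$ but the argument given actually works on the module category of $H$, which is dual to the comodule category of $H^*$; running the same argument with the roles of $H$ and $H^*$ exchanged delivers the second implication.) Thus everything reduces to showing that the hypothesis "$H$ and $H^*$ both lack the Chevalley property" forces $G(H)\ne 1$ and $G(H^*)\ne 1$.

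For this reduction, I would argue by contrapositive on each side. Suppose for instance $G(H)=1$. Since $H$ is a finite-dimensional Hopf algebra over a field of characteristic zero, $P(H)=0$ (as a nonzero primitive would make $U(P(H))$ embed as an infinite-dimensional Hopf subalgebra). But with $G(H)=1$, every $(g,h)$-skew-primitive must have $g=h=1$, i.e.\ must lie in $P(H)=0$. Hence $H$ has no nontrivial skew-primitive elements at all. A Taft--Wilson-type analysis of $H_1/H_0$ (generalized to the non-pointed setting: $H_1/H_0$ splits as a direct sum of pieces indexed by pairs of simple subcoalgebras of $H_0$) then forces $H_1=H_0$ to be closed under multiplication, so that $H_0$ is a Hopf subalgebra, i.e.\ $H$ has the Chevalley property — contradicting the hypothesis. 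The symmetric argument shows $G(H^*)\ne 1$.

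The hardest step will be the implication "$G(H)=1$ (and $H$ nonsemisimple of dimension $24$) $\Rightarrow$ $H$ has the Chevalley property". In the pointed case this is immediate from Taft--Wilson, but in the non-pointed regime one has to argue that the extensions between higher-dimensional simple subcoalgebras that could enlarge $H_1$ beyond $H_0$ cannot actually exist — equivalently, that every element creating new material in $H_1/H_0$ must be a nontrivial skew-primitive. Making this precise likely requires combining the description of $H_1$ via the wedge $H_0\wedge H_0$ with the dimension constraints coming from $\dim H=24$ and the restricted list of possible coradical decompositions $H_0=\Bbbk\oplus\bigoplus_{d\ge 2}H_{0,d}$ consistent with $G(H)=1$ and Nichols--Zoeller applied to any candidate Hopf subalgebra structure.
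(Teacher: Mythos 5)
Your overall skeleton agrees with the paper's: reduce to showing that $G(H)$ and $G(H^*)$ are nontrivial and then invoke Lemma \ref{l:1} for both $H$ and $H^*$ (and your parenthetical remark is right --- the proof of Lemma \ref{l:1} is carried out in $H$-mod, i.e.\ it proves the statement for $H^*$, which suffices by biduality). The divergence is in how nontriviality of the group-likes is obtained. The paper simply quotes \cite[Lemma 4.25]{BG}, which asserts that when neither $H$ nor $H^*$ has the Chevalley property both $|G(H)|$ and $|G(H^*)|$ are even; nothing further is needed. You instead propose to prove from scratch that $G(H)=1$ forces the Chevalley property, and this is where your argument has a genuine gap.

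Concretely: your observation that $G(H)=1$ kills all nontrivial skew-primitives is fine (a $(1,1)$-primitive is primitive, and $P(H)=0$ in finite dimension over characteristic zero). But the next step --- ``no nontrivial skew-primitives, hence $H_1=H_0$ by a Taft--Wilson-type analysis, hence $H_0$ is a Hopf subalgebra'' --- does not follow. In the non-pointed setting $H_1/H_0$ decomposes into components indexed by pairs of simple subcoalgebras of $H_0$, and skew-primitives only control the component attached to the pair $(\k 1,\k 1)$; the components attached to higher-dimensional simple subcoalgebras can be nonzero even when there are no nontrivial skew-primitives at all (indeed, this is exactly what happens for Hopf algebras without the Chevalley property whose coradical has trivial group-like part, such as the dual of the nontrivial $12$-dimensional example). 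You yourself flag ruling these out as ``the hardest step,'' but you give only a plan, not an argument, and it is precisely the entire mathematical content of the reduction. (A secondary slip: if you did get $H_1=H_0$, then $H=H_0$ would be cosemisimple and hence semisimple, contradicting nonsemisimplicity outright --- so the conclusion you should be aiming for is not merely ``Chevalley.'') As written, the proposal is incomplete; either supply the missing analysis of $H_0\wedge H_0$ for each admissible coradical with $G(H)=1$, or do as the paper does and cite \cite[Lemma 4.25]{BG}.
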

\begin{proof}
  By \cite[Lemma 4.25]{BG}, $|G(H)|$ and $|G(H^*)|$ are even. 
  The statement then follows immediately from Lemma \ref{l:1}.
\end{proof}

\begin{prop}
      Let $H$ be a nonbasic Hopf algebra of dimension $24$ without the Chevalley property. Then 
      $H$ is generated by its coradical.
     \end{prop}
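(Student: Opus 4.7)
The plan is to consider the subalgebra $K$ of $H$ generated by the coradical $H_{0}$, and show that $K=H$. Since $H_{0}$ is a subcoalgebra closed under the antipode, $K$ is automatically a Hopf subalgebra of $H$, and the task reduces to ruling out $K \subsetneq H$.

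A crucial preliminary observation is that $K$ and $H$ share the same coradical, i.e., $K_{0}=H_{0}$. On one hand, any simple subcoalgebra of $K$ is simple as a subcoalgebra of $H$ and hence contained in $H_{0}$, giving $K_{0}\subseteq H_{0}$. On the other hand, $H_{0}\subseteq K$ decomposes as a sum of coalgebras that remain simple within $K$, so $H_{0}\subseteq K_{0}$. Combined with the failure of the Chevalley property (so $H_{0}$ is not closed under multiplication), this forces $K\supsetneq H_{0}$, whence $K$ is nonsemisimple and $\dim K_{0}=\dim H_{0}<\dim K$.

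Next I collect size constraints. Since $H$ is nonbasic, $H^{*}$ is not pointed and $H_{0}$ contains some $\coM$ with $d\geq 2$, contributing at least $4$ to $\dim H_{0}$. Coupled with $|G(H)|$ being even by \cite[Lemma 4.25]{BG}, this yields $\dim H_{0}\geq 2+4=6$. Assuming for contradiction that $K\neq H$, the Nichols--Zoeller theorem gives $\dim K\mid 24$, so $\dim K\in\{6,8,12\}$. The bound $\dim K>\dim K_{0}\geq 6$ immediately eliminates $\dim K=6$, so only $\dim K\in\{8,12\}$ remain.

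For $\dim K=8$: the constraints force $|G(K)|=2$ and $\dim K_{0}=6$ with $K_{0}\cong \Bbbk C_{2}\oplus\coM$. Stefan's classification of 8-dimensional Hopf algebras is then invoked to rule out the existence of such a $K$ generated by this particular coradical. For $\dim K=12$: one has $\dim K_{0}\in\{6,8,10\}$ compatible with $|G(K)|$ even and the presence of a $4$-dimensional simple subcoalgebra. The classification of 12-dimensional Hopf algebras (due to Natale and Fukuda) would then be used to enumerate the finitely many possibilities for $K$ and to show in each case that its coradical cannot coincide with a proper subcoalgebra $H_{0}$ that generates it. The main obstacle is this last case: the $12$-dimensional analysis requires walking through the explicit list of Hopf algebras of that dimension, matching coradical types against the constraints imposed by the nonpointedness of $H^{*}$ and the parity of $|G(H)|$, and confirming in each case an incompatibility with $K$ being generated by $K_{0}$.
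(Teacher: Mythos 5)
Your reduction to the Hopf subalgebra $K$ generated by $H_0$, the identification $K_0=H_0$, and the use of Nichols--Zoeller to force $\dim K\in\{8,12\}$ when $K\neq H$ (dimension $6$ being excluded because $K$ cannot be semisimple) all match the paper. But the core of your argument rests on a false premise: you propose to finish by showing that no $8$- or $12$-dimensional Hopf algebra can be generated by a coradical of the required shape, and such Hopf algebras do exist. In dimension $8$, the dual of $\matha_{8}(-1,1)$ is nonsemisimple and nonpointed with coradical $\Bbbk C_2\oplus\coM$ of dimension $6$; since a $6$-dimensional coradical cannot be a Hopf subalgebra of an $8$-dimensional Hopf algebra, that coradical necessarily generates the whole algebra. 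In dimension $12$, Natale's classification produces exactly one Hopf algebra $\mathcal{C}$ whose coradical is not a Hopf subalgebra, and again $\mathcal{C}$ is generated by its coradical. So Stefan's and Natale's lists cannot kill these cases; the contradiction must come from how $K$ sits inside the $24$-dimensional $H$, not from the nonexistence of $K$.

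That is the missing idea. The paper invokes the generalized lifting method of Andruskiewitsch--Cuadra: taking the standard filtration with lowest term $K$, one writes $\gr H=R\#K$ with $R$ a braided Hopf algebra in ${}^{K}_{K}\mathcal{YD}$ of dimension $24/\dim K$. For $\dim K=8$ one has $\dim R=3$, and the results of Garc\'ia--Giraldi on Yetter--Drinfeld modules over this $K$ show that an indecomposable submodule $V$ of $R(1)$ must be simple of dimension $1$ or $2$; the $2$-dimensional case gives $\dim\toba(V)>3$, and the $1$-dimensional case produces a Hopf subalgebra of $\gr H$ of dimension $16\nmid 24$. For $\dim K=12$ one has $\dim R=2$, hence $R\cong R_{-1}$, and Xiong's theorem forces $H\cong\bigwedge\Bbbk_{\chi}\#\mathcal{C}$, whence $H^{*}\cong\bigwedge\Bbbk_{\chi}\#\mathcal{C}^{*}$ with $\mathcal{C}^{*}$ pointed, so $H$ would be basic, contradicting your hypothesis. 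A minor further point: your bound $\dim K_0\geq 6$ leans on $|G(H)|$ being even via \cite[Lemma 4.25]{BG}, which the paper only invokes when both $H$ and $H^{*}$ fail the Chevalley property; it is unnecessary here, since dimension $6$ is already excluded by the semisimplicity of all $6$-dimensional Hopf algebras.
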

     
\pf
Let $K$ be the Hopf subalgebra of $H$ generated by the coradical $H_{0}$. As $H$ is nonpointed, 
$\dim H_{0} >4$.  Since 6-dimensional Hopf algebras are semisimple,   $\dim K = 8, 12$ or $24$. 
Since $H$ does not have the Chevalley property, then
$K\neq H_{0}$ and $K$ is nonsemisimple and nonpointed. Consider the standard filtration 
associated to $K$ and write $\gr H = R\# K$, see \cite{AC}.

Assume $\dim K=8$. In such a case, $\dim R =3$. 
Let $V$ be an indecomposable Yetter-Drinfeld submodule of $R(1)$ and consider
the subalgebra $L$ of $R$ generated by $V$. Note that $L$ is not
necessarily a Nichols algebra since it might contain primitive elements in degree
bigger than one. In any case, there is a projection $L\twoheadrightarrow \toba(V)$ and
consequently $\dim L \geq \dim \toba(V)$. This implies by \cite[Theorem A]{GJG} that $V$
is simple and $\dim V =1$ or $2$. The latter case is impossible since $\dim \toba(V) >3$.
In the former case, by \cite[Proposition 3.2]{GJG} it follows that $\dim L =2$, 
which is impossible, since $\gr H$ would contain a Hopf subalgebra of dimension $16$.

Assume now that $\dim K = 12$. In such a case, $\dim R=2$. By the classification result of Natale \cite{N} 
(see also \cite{CN}), 
there is a unique Hopf algebra 
(up to isomorphism)
whose coradical is not a Hopf subalgebra. It is the dual of a 
pointed Hopf algebra; denote it by $ \mathcal{C}$.
As before, let $V$ be an indecomposable Yetter-Drinfeld submodule of $R(1)$ and consider
the subalgebra $L$ of $R$ generated by $V$.
Since $\dim R(0)=1$ and so $\dim R(1)=1$. 
Let $x$ be a nonzero element of $R(1)$. Then $x^2=0$ and so $R \cong R_{-1}$. In particular, 
$R^{*} \cong R$ as algebras and coalgebras.
Thus by \cite[Theorem B]{X}, $H$ is isomorphic to bosonization 
$\bigwedge \Bbbk_{\chi} \# \mathcal{C}$. 
This implies that 
$H^{*}\simeq \bigwedge \Bbbk_{\chi} \# \mathcal{C}^{*}$ and whence $H$ would be basic, a contradiction.
\epf

 %%%%%%%%%% BIBLIOGRAPHY %%%%%%%%%%%%%%%%%%%

\end{document}